\documentclass[onefignum,onetabnum]{siamart190516}

\usepackage[sort, numbers]{natbib}
\usepackage{amsmath,amsfonts,amssymb} 
\usepackage{times, fullpage}
\usepackage{mathtools,url,tikz}
\usepackage{todonotes,enumerate,etoolbox,pgfplots,algorithmic}

\newcommand{\norm}[1]{{\left\lVert#1\right\rVert}}

\newtheorem{example}[theorem]{Example}

\newcommand{\inner}[2]{{#1^{\tsp} #2}}
\def\tsp{{\sf T}}

\newcommand{\normsq}[1]{{\left\lVert#1\right\rVert^2}}

\pgfplotsset{compat=1.13}
\pgfplotsset{plotOptions/.style={%
		width=\linewidth,
		xmin=-.05,	xmax=.3,
		ymin=-.05,  ymax=1.05,
		xlabel={$\| x_0-x_* \|_{x_*}$},
		ylabel={Upper bound on $\| x_1 - x_* \|_{x_*}$},
		label style={font=\small},
		legend style={font=\small},
		legend pos=north west,
		xtick={0,.05,.1, .15,.2, .25},
		tick label style={
			font=\footnotesize,
			/pgf/number format/.cd,
			fixed,
			/tikz/.cd},
		solid,
		very thick
	}}

\title{Worst-case convergence analysis of inexact gradient and  Newton methods through semidefinite programming performance estimation\thanks{\funding{Fran\c{c}ois Glineur is
 supported by the Belgian Interuniversity Attraction Poles, and by the ARC grant 13/18-054 (Communaut\'e fran\c{c}aise de Belgique).
 Adrien Taylor  acknowledges support from the European Research Council (grant SEQUOIA 724063).}}}

\author{Etienne de Klerk\thanks{Tilburg University, The Netherlands,  \email{E.deKlerk@uvt.nl}.} \and Fran\c{c}ois Glineur\thanks{
UCL / CORE and ICTEAM, Louvain-la-Neuve, Belgium, \email{Francois.Glineur@uclouvain.be}.}
\and  Adrien B. Taylor\thanks{INRIA, D\'epartement d'informatique de l'ENS, Ecole normale sup\'erieure, CNRS, PSL Research University, Paris, France, \email{Adrien.Taylor@inria.fr}. }}

\begin{document}
\maketitle

\begin{abstract}
We provide new tools for worst-case performance analysis of the
 gradient (or steepest descent) method of Cauchy for smooth strongly convex
  functions, and Newton's method for self-concordant functions, including the
case of inexact search directions.
The analysis uses semidefinite programming performance estimation, as pioneered by Drori and Teboulle [{\em Mathematical Programming}, 145(1-2):451--482, 2014],
and extends recent performance estimation results for the method of Cauchy by the authors [{\em Optimization Letters}, 11(7), 1185-–1199, 2017].
To illustrate the applicability of the tools,
we demonstrate a novel complexity analysis of short step interior point methods using inexact search directions.
As an example in this framework, we sketch how to give a rigorous worst-case complexity analysis of a recent interior point method
by Abernethy and Hazan [\emph{PMLR}, 48:2520--2528, 2016].
\end{abstract}

\begin{keywords}
 performance estimation problems, gradient method, inexact search direction, semidefinite programming, interior point methods
\end{keywords}

\begin{AMS}
 90C22, 90C26, 90C30
\end{AMS}

\section{Introduction}
We consider the worst-case convergence of the gradient and Newton methods (with or without exact linesearch, and with possibly inexact search directions)
for certain smooth and strongly convex functions.

Our analysis is computer-assisted\footnote{Our analysis is computer-assisted in the following sense: computation was used to identify proofs, that
   could subsequently be verified in a standard mathematically rigorous way. Hence our results do not rely on the outcome of computations.} and relies on
semidefinite programming (SDP) performance estimation problems, as introduced by Drori and Teboulle \cite{drori2014}.
As a result, we develop a set of tools that may be used to design or analyse a wide range of interior point (and other) algorithms. Our analysis
is in fact an extension of the worst-case analysis of the gradient method in \cite{Klerk_Glineur_Taylor_2016}, combined with the
fact that Newton's method may be viewed as a gradient method with respect to a suitable local (intrinsic) inner product.
As a result, we obtain worst-case convergence results for a single
 iteration of a wide range of methods, including Newton's method and the steepest descent method.

\subsection*{Related work}
Our work  is similar in spirit to
recent analysis by Li et al \cite{Li_et_al_2016} of inexact proximal Newton methods for self-concordant
functions, but our approach and results are different.
 Their approach is oriented toward  inexactness from the difficulty of computing the proximal Newton step,
  whereas ours is oriented toward the difficulty of computing the Hessian.
Also, the authors of \cite{Li_et_al_2016}
do not use SDP performance estimation.

Since the seminal work by Drori and Teboulle~\cite{drori2014}, several authors have
 extended the SDP performance estimation framework. The authors of \cite{Taylor_Glineur_Hendrickx_2017a} introduced tightness guarantees
  for smooth (strongly) convex optimization, and for larger classes of
  problems in~\cite{Taylor_Glineur_Hendrickx_2017b} (where a list of sufficient conditions for applying the methodology is provided).
   It was also used to deal with nonsmooth problems~\cite{drori2016,Taylor_Glineur_Hendrickx_2017b},
     monotone inclusions and variational inequalities~\cite{Ryu2018,gu2019a,gu2019b,kim2019accelerated},
     and even to study fixed-point iterations of non-expansive operators~\cite{lieder2017convergence}.
      Fixed-step gradient descent was among the first algorithms to be studied with this methodology in different settings:
      for (possibly composite) smooth (possibly strongly)
      convex optimization~\cite{drori2014,drori2014contributions,Taylor_Glineur_Hendrickx_2017a,Taylor_Glineur_Hendrickx_2017b}, and
      its line-search version was studied using the same methodology in~\cite{Klerk_Glineur_Taylor_2016}.
      The performance estimation framework was also used for obtaining
        new methods with optimized worst-case performance guarantees in different
         settings~\cite{kim2016,drori2016,kim2018optimizing,Drori_Taylor}.
         In particular, such new methods were obtained by optimization of their algorithmic
          parameters in~\cite{kim2016,drori2016,kim2018optimizing}, and by
          analogy with conjugate-gradient type methods (doing greedy span-searches) in~\cite{Drori_Taylor}.
          Performance estimation is also related to the line of work on \emph{integral quadratic constraints} started by
           Lessard, Recht, and Packard~\cite{Lessard_Recht_Packard_2016}, which also allowed designing optimized
           methods, see~\cite{van2018fastest,Cyrus_Hu_Scoy_Lessard_2018}. The approach in~\cite{Lessard_Recht_Packard_2016}
           may be seen as a (relaxed) version of an SDP performance estimation problem where Lyapunov functions are used to
            certify error bounds~\cite{Taylor_Scoy_Lessard}.

\subsubsection*{Outline and contributions of this paper}
The contribution of this work is two-fold:
\begin{enumerate}
\item
We extend the SDP performance estimation framework to include smooth, strongly convex functions that are not defined over the entire $\mathbb{R}^n$.
By considering arbitrary inner products, we unify the analysis for gradient descent-type methods and Newton's method, thus extending the results in
\cite{Klerk_Glineur_Taylor_2016} in a significant way. In particular, we are able to give a new error analysis of inexact Newton methods for
 self-concordant functions. Thus we provide the first extension of  SDP performance estimation to  second order methods.
\item
As an {application} of the tools we develop,
we give an analysis of inexact short step interior point methods.
As an example of how our analysis may be used, we sketch how one may give a rigorous analysis of a recent interior point method by
Abernethy and Hazan \cite{Abernethy_Hazan_2016},
 where
the search direction is approximated through sampling. This particular method has sparked recent interest, since {it
links} simulated annealing and interior point methods. However, {no detailed analysis of the method is provided} in \cite{Abernethy_Hazan_2016},
 and we supply some crucial
details that are missing there.
\end{enumerate}

In Section \ref{sec:Preliminaries}, we review some basics on gradient descent methods, coordinate-free calculus, and convex functions.
Thereafter, in Section \ref{sec:Classes of convex  functions}, we describe inequalities for various classes of convex functions that
will be used in performance estimation problems. A novel aspect here is that we allow the convex domain to be arbitrary in the study of smooth, strongly convex functions.
Section \ref{sec:Performance estimation problems} introduces the SDP performance estimation problems for various classes of convex functions and domains,
and the error bounds from the analytical solutions of the performance estimation are given in Section \ref{sec:Error bounds from performance estimation}.
In Section \ref{sec:Implications for Newton} we show that our general framework for smooth, strongly convex functions includes self-concordant functions, and we are thus able to
study inexact Newton(-type) methods for self-concordant functions.
We then switch to an application of the tools we have developed, namely the complexity analysis of short step interior  point methods
that use inexact search directions (Section \ref{sec:inexactIPM}). As an example of inexact interior point methods, we consider a recent
algorithm
 by
Abernethy and Hazan \cite{Abernethy_Hazan_2016}, that uses inexact Newton-type directions for the so-called entropic
(self-concordant) barrier; see Section \ref{sec:Analysis of the method of Abernathy-Hazan}. We stress that this  example only serves to illustrate our results,
 and we do not give the full details of the analysis, which is beyond the scope of our paper.
 The additional analysis on how sampling may be used in the method by
Abernethy and Hazan \cite{Abernethy_Hazan_2016} to compute the inexact Newton-type directions is given in the separate work \cite{Badenbroek_DeKlerk2018}.


\section{Preliminaries}
\label{sec:Preliminaries}

Throughout $f$ denotes a differentiable convex function, {whose domain is denoted by $D_f \subset \mathbb{R}^n$, whereas $D$ is used to denote open sets that may not correspond to the domain of $f$}.
We will indicate additional assumptions on  $f$ as needed. We will mostly use the notation from the book by Renegar \cite{Renegar2001a}, for easy reference.

\subsection{Gradients and Hessians}
In what follows we fix a reference inner product $\langle \cdot,\cdot\rangle$ on $\mathbb{R}^n$ with induced norm $\|\cdot\|$.

\begin{definition}[Gradient and Hessian]
\label{def:gradient}
If $f$ is  differentiable, the gradient of $f$ at $x \in D_f$ with respect to $\langle \cdot , \cdot \rangle$
is the unique vector $g(x)$ such that
\[
\lim_{\| \Delta x\| \rightarrow 0} \frac{f(x+\Delta x) - f(x) - \langle g(x) , \Delta x \rangle }{\|\Delta x\|} = 0.
\]
If $f$ is twice differentiable, the second derivative (or Hessian) of $f$ at $x$ is defined as the (unique) linear
operator $H(x)$ that satisfies
\[
\lim_{\|\Delta x\| \rightarrow 0} \frac{\|g(x+\Delta x) - g(x)-H(x)\Delta x\|}{\|\Delta x\|} = 0.
\]
\end{definition}

Note that $g(x)$, and therefore also $H(x)$, depend on the reference inner product. If $\langle \cdot,\cdot\rangle$ is the Euclidean dot product then
 $g({x}) = \nabla f({x}) = \left[\frac{\partial f({x})}{\partial x_i}\right]_{i=1,\ldots,n}$, and
 the Hessian, when written as a matrix with respect to the standard basis, takes the familiar form $[H(x)]_{ij} =: [\nabla^2 f(x)]_{ij} = \frac{\partial^2f(x) }{\partial x_i \partial x_j}$
 $(i,j \in \{1,\ldots,n\})$.

 If $B:\mathbb{R}^n \rightarrow \mathbb{R}^n$ is a self-adjoint positive definite linear operator, we may define a
 new inner product in terms of the reference inner product as follows:
 $\langle \cdot,\cdot\rangle_B$ via $\langle {x}, {y}\rangle_{B} = \langle {x}, B{y}\rangle$ $\forall x,y \in \mathbb{R}^n$.
 (Recall that  all inner products in $\mathbb{R}^n$ arise in this way.)
If we change the inner product in this way, then the gradient changes to $B^{-1} g({x})$, and the Hessian at $x$ changes to  $B^{-1}H(x)$.

Recall that $H(x)$ is self-adjoint with respect to the reference inner product if $f$ is twice continuously differentiable.
Assuming that $H(x)$ is positive definite and self-adjoint at a given point $x$, define the intrinsic (w.r.t. $f$ at $x$) inner product
\[
\langle u,v\rangle_{x} :=  \langle u,v\rangle_{H(x)} \equiv \langle
u,H(x)v\rangle.
\]
 The definition is {\em independent of the reference inner product}
$\langle \cdot, \cdot \rangle$.
The induced norm for the intrinsic inner product is denoted by: $\|u\|_x = \sqrt{\langle u,u\rangle_{x}}$.
For the intrinsic inner product,
the gradient at $y$ is denoted by $g_x(y) := H(x)^{-1}g(y)$, and
the Hessian at $y$ by  $H_x(y):=H(x)^{-1}H(y)$.

\subsection{Fundamental theorem of calculus}
In what follows, we will recall coordinate-free versions of the fundamental theorem
of calculus. Our review follows Renegar \cite{Renegar2001a}, and all proofs may be found there.
\begin{theorem}[Theorem 1.5.1 in \cite{Renegar2001a}]
\label{thm:151}
If $x,y \in D_f$, then
\[
f(y) - f(x) = \int_0^1  \langle g(x + t(y-x)), y-x\rangle dt.
\]
\end{theorem}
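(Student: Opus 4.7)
The plan is to reduce the multivariate identity to the usual one-dimensional fundamental theorem of calculus by restricting $f$ to the line segment from $x$ to $y$. Since $D_f$ is convex, the segment $\{x + t(y-x) : t \in [0,1]\}$ lies in $D_f$, so the auxiliary function
\[
\phi(t) := f(x + t(y-x)), \qquad t \in [0,1],
\]
is well-defined on $[0,1]$.

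The first key step is to show that $\phi$ is differentiable on $(0,1)$ with derivative $\phi'(t) = \langle g(x + t(y-x)), y-x\rangle$. I would prove this directly from Definition~\ref{def:gradient}: fix $t \in (0,1)$, set $z = x + t(y-x)$ and $\Delta x = h(y-x)$ for small $h$, and write
\[
\frac{\phi(t+h) - \phi(t)}{h} - \langle g(z), y - x\rangle = \frac{f(z + \Delta x) - f(z) - \langle g(z), \Delta x\rangle}{h}.
\]
The right-hand side equals $\tfrac{\|\Delta x\|}{h}$ times the quantity in Definition~\ref{def:gradient}, and $\|\Delta x\|/|h| = \|y-x\|$ is bounded, so the expression tends to $0$ as $h \to 0$. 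Hence $\phi$ is differentiable with the claimed derivative, and an analogous argument yields one-sided differentiability at the endpoints.

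The second key step is to invoke the classical one-dimensional fundamental theorem of calculus. For this I need $\phi'$ to be (at least) Riemann integrable on $[0,1]$; continuity of $g$ on $D_f$ (which follows from differentiability of $f$, at least in the strong sense typically assumed in this setting, and is tacit throughout the paper) ensures that $t \mapsto \langle g(x + t(y-x)), y-x\rangle$ is continuous on $[0,1]$. Then
\[
f(y) - f(x) = \phi(1) - \phi(0) = \int_0^1 \phi'(t)\, dt = \int_0^1 \langle g(x + t(y-x)), y-x\rangle\, dt,
\]
which is the claimed identity.

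The only delicate point is the chain-rule computation for $\phi'$, which must be done carefully from the coordinate-free definition of $g$ rather than by appealing to partial derivatives, since the reference inner product $\langle\cdot,\cdot\rangle$ is arbitrary. Once that step is handled, the rest is a direct application of the one-variable fundamental theorem.
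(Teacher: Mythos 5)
Your proof is correct and follows the standard argument, which is also the one in Renegar's book (the paper itself cites Theorem 1.5.1 there and defers the proof): restrict $f$ to the segment via $\phi(t) = f(x+t(y-x))$, compute $\phi'(t) = \langle g(x+t(y-x)), y-x\rangle$ directly from the coordinate-free definition of the gradient, and apply the one-dimensional fundamental theorem. You are also right to flag that the final step needs $g$ to be continuous along the segment (so $\phi'$ is integrable), which is implicit in the regularity assumed throughout the paper.
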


Next, we recall the definition of a vector-valued integral.

\begin{definition}
Let $t \mapsto v(t) \in \mathbb{R}^n$ where $t \in [a,b]$.
Then $u$ is the integral of $v$ if
\[
\langle u,w\rangle = \int_a^b \langle v(t),w\rangle \; dt \mbox{ for all } w \in \mathbb{R}^n.
\]
\end{definition}
Note that this definition is in fact independent of the reference inner product.
We will use the following bound on norms of vector-valued integrals.

\begin{theorem}[Proposition 1.5.4 in \cite{Renegar2001a}]
\label{prop:154}
Let $t \mapsto v(t) \in \mathbb{R}^n$ where $t \in [a,b]$.
If $v$ is integrable, then
\[
\left\|\int_a^b v(t)dt \right\| \le \int_a^b \|v(t)\|dt.
\]
\end{theorem}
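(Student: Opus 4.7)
The plan is to reduce the vector inequality to a scalar inequality by testing the integral against a well-chosen unit vector, then apply the Cauchy--Schwarz inequality pointwise under the scalar integral.

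First I would set $u := \int_a^b v(t)\, dt \in \mathbb{R}^n$. If $u = 0$, the inequality is immediate since $\int_a^b \|v(t)\|\, dt \ge 0$, so I may assume $u \ne 0$ and define the unit vector $w := u / \|u\|$. The key observation is that the defining identity of the vector-valued integral,
\[
\langle u, w \rangle = \int_a^b \langle v(t), w \rangle\, dt,
\]
turns the left-hand side into a scalar quantity that I can directly compute: by choice of $w$, $\langle u, w\rangle = \langle u, u/\|u\|\rangle = \|u\|$.

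Next I would bound the right-hand side. Pointwise in $t$, the Cauchy--Schwarz inequality (valid for any inner product and its induced norm) gives
\[
\langle v(t), w \rangle \le |\langle v(t), w\rangle| \le \|v(t)\|\, \|w\| = \|v(t)\|,
\]
since $\|w\| = 1$. Integrating this inequality over $[a,b]$ (using monotonicity of the scalar Lebesgue/Riemann integral for integrable integrands) yields $\int_a^b \langle v(t), w\rangle\, dt \le \int_a^b \|v(t)\|\, dt$. Chaining the equalities and inequalities produces $\|u\| = \langle u, w\rangle = \int_a^b \langle v(t), w\rangle\, dt \le \int_a^b \|v(t)\|\, dt$, which is exactly the claim.

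There is no real obstacle here: the only nontrivial move is choosing the test vector $w = u/\|u\|$ so that the defining property of the vector integral converts the vector norm $\|u\|$ into a scalar integral that is dominated termwise by $\|v(t)\|$. One should just note that the scalar function $t \mapsto \langle v(t), w\rangle$ is integrable whenever $v$ is (this is implicit in the definition of the vector-valued integral), so the scalar comparison of integrals is justified.
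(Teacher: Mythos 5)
Your proof is correct and is essentially the standard argument given in Renegar (Proposition 1.5.4), to which the paper defers without reproducing the proof: choose the test vector $w = u/\|u\|$ so that the defining property of the vector-valued integral reduces the claim to a scalar integral, then bound pointwise by Cauchy--Schwarz. The degenerate case $u=0$ and the integrability of $t \mapsto \langle v(t),w\rangle$ are handled appropriately.
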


Finally, we will  require the following version of the fundamental theorem for gradients.

\begin{theorem}[Theorem 1.5.6 in \cite{Renegar2001a}]
\label{thm:156}
If $x,y \in D_f$, then
\[
g(y)-g(x) = \int_0^1 H(x+t(y-x))(y-x)dt.
\]
\end{theorem}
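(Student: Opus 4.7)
The plan is to reduce the vector-valued identity to the scalar fundamental theorem of calculus by testing against an arbitrary vector $w \in \mathbb{R}^n$, and then invoke the definition of the vector-valued integral to conclude.

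First I would fix $x, y \in D_f$, set $\Delta = y - x$, and (assuming $D_f$ is open so that a small open neighborhood of the segment lies in $D_f$) define for each $w \in \mathbb{R}^n$ the scalar function
\[
\psi_w(t) := \langle g(x + t\Delta), w\rangle, \quad t \in [0,1].
\]
The central step is to show that $\psi_w$ is continuously differentiable with derivative $\psi_w'(t) = \langle H(x+t\Delta)\Delta, w\rangle$. This follows from Definition \ref{def:hessian}: writing $z(t) = x + t\Delta$ and considering the increment
\[
\psi_w(t+h) - \psi_w(t) = \langle g(z(t)+h\Delta) - g(z(t)), w\rangle,
\]
the definition of $H(z(t))$ gives $g(z(t)+h\Delta) - g(z(t)) = h H(z(t))\Delta + o(h)$ as $h \to 0$, and pairing with $w$ yields the claimed derivative. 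Continuity in $t$ comes from the continuity of $H$ that is implicit in the twice continuous differentiability assumed for $f$ (self-adjointness of $H$ is noted right after Definition \ref{def:hessian}).

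With this in hand, the scalar fundamental theorem of calculus applied to $\psi_w$ on $[0,1]$ gives
\[
\langle g(y) - g(x), w\rangle = \psi_w(1) - \psi_w(0) = \int_0^1 \langle H(x+t\Delta)\Delta, w\rangle\, dt.
\]
Since this holds for every $w \in \mathbb{R}^n$, the definition of the vector-valued integral applied to $v(t) := H(x+t\Delta)\Delta$ directly identifies $g(y)-g(x)$ as $\int_0^1 H(x+t\Delta)\Delta\, dt$, which is the desired identity.

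The only delicate point I expect is the rigorous derivation of $\psi_w'$ from the limit-based Definition \ref{def:hessian}: one has to convert the $o(\|h\Delta\|)$ error estimate for $g$ into a correct $o(h)$ estimate uniformly along the segment so that $\psi_w$ is genuinely $C^1$ and the scalar FTC applies. This is routine provided $H$ is continuous on a neighborhood of the segment $\{x+t\Delta : t \in [0,1]\}$, which is a standing assumption. Once that technicality is dispatched, the remainder is just an application of Theorem \ref{thm:151}'s companion argument together with the coordinate-free integral definition.
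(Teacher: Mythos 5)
The paper does not supply its own proof of Theorem~\ref{thm:156}: it recalls the statement verbatim from Renegar's book and explicitly defers all proofs of these coordinate-free calculus facts to that reference, so there is no in-paper argument to compare against. Your proof is correct, and it is the standard (and indeed Renegar's) route: pair $g$ along the segment with an arbitrary test vector $w$, differentiate the resulting scalar function using Definition~\ref{def:hessian}, apply the classical scalar fundamental theorem of calculus, and then recognize the right-hand side via the coordinate-free definition of the vector-valued integral. One small remark: the ``delicate point'' you flag is less delicate than you suggest. Differentiability of $\psi_w$ at a fixed $t$ is a pointwise statement that follows directly from the $o(\|h\Delta\|)=o(|h|)$ estimate at that $t$ alone; continuity of $\psi_w'$ then comes from continuity of $t\mapsto H(x+t\Delta)$, which is part of the standing $C^2$ hypothesis. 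No uniformity of the error estimate along the segment is required for the scalar fundamental theorem of calculus to apply.
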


\subsection{Inexact gradient and Newton methods}

We consider approximate gradients
${d}_i \approx  g(x_i)$ at a given iterate $x_i$ $(i=0,1,\ldots)$; to be precise we assume the following for a given $\varepsilon \in [0,1)$:
\begin{equation}
\label{eq:error}
\|d_i-g({x}_i)\| \le \varepsilon\|g({x}_i)  \| \quad i = 0,1,\ldots
\end{equation}
Note that $\varepsilon = 0$ yields the gradient, i.e.\ $d_i = g(x_i)$.

\begin{algorithm}
\caption{Inexact gradient descent method}
\label{alg:buildtree}
\begin{algorithmic}
\STATE{\textbf{Input:} $f:\mathbb{R}^n\rightarrow \mathbb{R}$, ${x}_0\in \mathbb{R}^n$, ${0\le \varepsilon < 1}$}
\STATE{\textbf{for} $i=0,1,\ldots$}
\STATE{Select any  ${d}_i$ that satisfies \eqref{eq:error} }
\STATE{Choose a step size $\gamma > 0$}
\STATE{Update ${x}_{i+1}={x}_i-\gamma {d}_i$}
\end{algorithmic}
\end{algorithm}

We will consider two ways of choosing the step size $\gamma$:
\begin{enumerate}
\item
Exact line search:
$\gamma=\text{argmin}_{\gamma \in \mathbb{R}}f\left({x}_i-\gamma {d}_i\right)$;
\item
Fixed step size: $\gamma$ takes the same value at each iteration, and this value is known beforehand.
\end{enumerate}
We note once more that, at iteration $i$, we obtain the Newton direction by using the $\langle \cdot,\cdot \rangle_{x_i}$ inner product (if $\varepsilon = 0$).
More generally, by using an inner product $\langle \cdot,\cdot\rangle_{B}$ for some positive definite, self-adjoint operator $B$, we obtain
the direction $-B^{-1}g(x_i)$, which is the Newton direction when $B = H(x_i)$, a quasi-Newton direction if $B \approx H(x_i)$, and the familiar
steepest descent direction $-\nabla f(x_i)$ if $B = I$.

\section{Classes of convex  functions}
\label{sec:Classes of convex  functions}
In this section we review two classes of convex functions, namely smooth, strongly convex functions and self-concordant functions.
We also show that, in a certain sense, the latter class may be seen as a special case of the former, by extending some known results
on the first class.

\subsection{Convex functions}
Recall that a differentiable function $f$
is convex on  open convex set ${D} \subset \mathbb{R}^n$ if and only if
\begin{equation}
\label{convex_f}
f(y) \ge  f(x) + \langle g(x),y-x\rangle \quad \forall x,y \in {D}.
\end{equation}
Also recall that a twice continuously differentiable function is convex on ${D}$ if and only if $H(x) \succeq 0$ for all $x \in {D}$.

\subsection{Smooth strongly convex functions}
A differentiable function $f$ with $D_f = \mathbb{R}^n$ is called $L$-smooth and $\mu$-strongly convex if it satisfies the following two properties:
\begin{itemize}
\item[(a)] {\bf $L$-smoothness}: there exists some $L>0$ such that $\frac{1}{L}\norm{ g(u) - g(v)}\leq \norm{u-v}$
holds for all pairs  $u,v$ and corresponding gradients $g(u), g(v)$.
\item[(b)] {\bf $\mu$-strong convexity}: there exists some $\mu>0$ such that the function $x \mapsto f(x)-\frac{\mu}{2}\|{x}\|^2$ is convex.
\end{itemize}
The class of such functions is denoted by $\mathcal{F}_{\mu,L}(\mathbb{R}^n)$.
Note that this function class is defined in terms of the reference inner product and its induced norm.
In particular, it is not invariant under a change of inner product: under a change of inner product a smooth, strongly convex function
remains smooth, strongly convex, but the parameters $\mu$ and $L$ depend on the inner product.
For our purposes, it is important not to fix the inner product a priori.

If $D\subseteq \mathbb{R}^n$ is an open, convex set, then we denote the set of functions that satisfy properties (a) and (b) on $D$ by $\mathcal{F}_{\mu,L}(D)$.
The following lemma refines and extends well-known results from the literature on smooth convex functions. The novelty here is that we
allow for arbitrary inner products, and any open, convex domain $D$.
Similar results, are given, for example in the textbook by Nesterov \cite[Chapter 2]{nesterov_lectures_convex_opt},
and we only prove the results not covered there. We will use the L\"owner partial order notation: for symmetric matrices (or self-adjoint linear operators)
$A$ and $B$, `$A \preceq B$' means that $B-A$ is positive semidefinite. We will denote the identity matrix (or operator) by $I$.

\begin{lemma} Let $D$ be an open convex set and $f:D\rightarrow \mathbb{R}$ be twice continuously differentiable. The following statements are equivalent:
\label{lemm:Lsmooth}
\begin{itemize}
\item[(a)] $f$ is convex and $L$-smooth on $D$,
\item[(b)] $0\preceq H(x) \preceq L I$ $\;\forall x\in D$,
\item[(c)] $\langle{g(y)-g(x)},{y-x}\rangle \geq \frac{1}{L}\normsq{g(y)-g(x)}$ $\;\forall x,y\in D$.
\end{itemize}
\end{lemma}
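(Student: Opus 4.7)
The plan is to establish the cycle of implications (a)$\Rightarrow$(b)$\Rightarrow$(c)$\Rightarrow$(a), since each direction is short once set up with the right tool. Throughout, recall that convexity of a twice continuously differentiable $f$ is equivalent to $H(x)\succeq 0$ on $D$, so the ``convex'' part of (a) is interchangeable with the $H\succeq 0$ half of (b).

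For (a)$\Rightarrow$(b), convexity gives $H(x)\succeq 0$ and it remains to deduce $H(x)\preceq L I$. Plug $v=u+t\Delta x$ into the $L$-smoothness inequality, divide by $t$, and let $t\to 0$; using Definition \ref{def:hessian} gives $\|H(u)\Delta x\|\le L\|\Delta x\|$ for every $\Delta x$, whence $H(u)\preceq LI$ by self-adjointness.

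For (b)$\Rightarrow$(c), the key intermediate step is the descent lemma
\[
f(y)\le f(x)+\langle g(x),y-x\rangle+\tfrac{L}{2}\|y-x\|^2.
\]
This follows by applying Theorem \ref{thm:151} to $f(y)-f(x)-\langle g(x),y-x\rangle$, expressing $g(x+t(y-x))-g(x)$ via Theorem \ref{thm:156}, and using $H\preceq LI$ together with Theorem \ref{prop:154}. Given the descent lemma, introduce, for fixed $x$, the auxiliary function
\[
\phi_x(y):=f(y)-\langle g(x),y\rangle.
\]
Its gradient is $g(y)-g(x)$, its Hessian is $H(y)$, so it satisfies the hypotheses of (b) and has a global minimum at $y=x$. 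Applying the descent lemma to $\phi_x$ at the point $y$ with step $-\tfrac{1}{L}(g(y)-g(x))$, the left-hand side is bounded below by $\phi_x(x)$, yielding
\[
\phi_x(y)-\phi_x(x)\ge \tfrac{1}{2L}\|g(y)-g(x)\|^2.
\]
Swapping $x$ and $y$, writing the analogous inequality for $\phi_y$, and summing the two cancels the $f$-values and produces precisely (c).

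For (c)$\Rightarrow$(a), Cauchy--Schwarz on the left side of the inequality in (c) gives $\|g(y)-g(x)\|\cdot\|y-x\|\ge \tfrac{1}{L}\|g(y)-g(x)\|^2$, so $\|g(y)-g(x)\|\le L\|y-x\|$, which is $L$-smoothness. Meanwhile (c) implies $\langle g(y)-g(x),y-x\rangle\ge 0$, i.e.\ monotonicity of the gradient, which is the standard first-order characterization of convexity, closing the loop.

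The main obstacle is the (b)$\Rightarrow$(c) direction: one must recognize that co-coercivity is not obtainable from the descent lemma applied to $f$ alone but only after shifting to the auxiliary function $\phi_x$ whose minimizer is $x$; the rest of the argument is purely mechanical use of the coordinate-free calculus tools recalled above.
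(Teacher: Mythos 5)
Your cycle (a)$\Rightarrow$(b)$\Rightarrow$(c)$\Rightarrow$(a) is the same as the paper's, and your (a)$\Rightarrow$(b) and (c)$\Rightarrow$(a) arguments are in essence identical to theirs (the only cosmetic difference in (c)$\Rightarrow$(a) is that you invoke gradient monotonicity as a known characterization of convexity, while they integrate $\langle g(x+t(y-x))-g(x),y-x\rangle$ directly via Theorem \ref{thm:151}). The substantive divergence, and the problem, is in (b)$\Rightarrow$(c).

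Your (b)$\Rightarrow$(c) is the classical Nesterov-style argument: derive the descent lemma, shift to the tilted function $\phi_x(y)=f(y)-\langle g(x),y\rangle$, take a step of length $\tfrac{1}{L}$ from $y$, lower-bound by the minimum, and symmetrize. This is correct when $D=\mathbb{R}^n$, but the lemma is stated for an arbitrary open convex $D$, and your argument needs to evaluate $\phi_x$ at the auxiliary point $z = y - \tfrac{1}{L}\bigl(g(y)-g(x)\bigr)$. There is no reason for $z$ to lie in $D$: it is generally off the segment $[x,y]$, and $D$ may be bounded. The inequality $\phi_x(z)\ge\phi_x(x)$ is then meaningless (and even the descent lemma step $\phi_x(z)\le \phi_x(y)+\langle g(y)-g(x),z-y\rangle+\tfrac{L}{2}\|z-y\|^2$ requires $[y,z]\subset D$). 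So the argument has a genuine gap in precisely the generality the lemma is used (e.g.\ Theorem \ref{thm:sc_vs_FmuL} applies it to a Dikin-type ball $D=B_x(x,\delta)$).

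The paper's proof sidesteps this entirely by staying on the segment $[x,y]\subset D$: it writes $g(y)-g(x)$ as an integral of $H$ along that segment (Theorem \ref{thm:156}), uses the pointwise matrix inequality $H - \tfrac{1}{L}H^2\succeq 0$ (a direct consequence of $0\preceq H\preceq LI$), and then Jensen plus Theorem \ref{prop:154} to pass from $\int\|H(\cdot)(y-x)\|^2$ to $\|\int H(\cdot)(y-x)\|^2$. That argument is local to $[x,y]$ and therefore valid on any open convex $D$. To repair your proof you would either need to restrict to $D=\mathbb{R}^n$, or replace the tilted-function trick with the paper's integral identity.
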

\begin{proof}
$(a)\Rightarrow(b)$:
The proof of this implication is similar to that
of \cite[Theorems 2.1.5 and 2.1.6]{nesterov_lectures_convex_opt} (see relation (2.1.16) there), and is therefore omitted here.
%

 $(b)\Rightarrow(c)$:
If $\|H(x)\| \le L$, then $H(x) - \frac{1}{L}H^2(x) \succeq 0$ for all $x \in D$, and Theorem \ref{thm:156} implies
 \begin{eqnarray*}
\langle g(y)-g(x),y-x\rangle &=& \left\langle y-x,\int_0^1 H(x+t(y-x))(y-x)dt\right\rangle \\
&=& \int_0^1\langle y-x, H(x+t(y-x))(y-x)\rangle dt \\
&\ge & \int_0^1\langle y-x, \frac{1}{L}H^2(x+t(y-x))(y-x)\rangle dt \\
&=& \frac{1}{L} \int_0^1 \| H(x+t(y-x))(y-x)\|^2 dt \\
&\ge & \frac{1}{L} \left(\int_0^1 \| H(x+t(y-x))(y-x)\| dt\right)^2 \quad \mbox{(Jensen inequality)} \\
& \ge & \frac{1}{L} \left\|\int_0^1  H(x+t(y-x))(y-x) dt\right\|^2 \quad \mbox{(Theorem \ref{prop:154})}\\
&=& \frac{1}{L}\|g(y)-g(x)\|^2 \quad \mbox{(Theorem \ref{thm:156})}.
\end{eqnarray*}
  $(c)\Rightarrow(a)$:
  Condition $(c)$, together with the Cauchy-Schwartz inequality, immediately imply $L$-smoothness.
  To show convexity, note that, by Theorem \ref{thm:151},
  \begin{eqnarray*}
  f(y) - f(x) -\langle g(x),y-x\rangle &=& \int_0^1  \frac{1}{t}\langle g(x + t(y-x))-g(x), t(y-x)\rangle dt \\
                                           &\ge&  \int_0^1  \frac{1}{tL}\| g(x + t(y-x))-g(x)\|^2 dt \ge 0, \\
    \end{eqnarray*}
  where the first inequality is from condition $(c)$. Thus we obtain the convexity inequality \eqref{convex_f}.
\end{proof}

An interesting question  is to understand the class of functions where the following inequality
 holds
\begin{equation}\label{FmuL_Rn}
  f(y)-f(x) - \langle{g(x)},{y-x}\rangle \geq \frac{1}{2L}\normsq{g(y)-g(x)}
\end{equation}
for all $x,y$ in a given open convex set $D$, where $L>0$ is fixed. (Note that \eqref{FmuL_Rn} implies
condition (c) in the lemma, by adding  \eqref{FmuL_Rn} to itself after interchanging $x$ and $y$.)

Indeed, for performance estimation problems, one attempts to find a  function from a
specific class that corresponds to the worst-case input for a given iterative algorithm.
It is therefore of both practical and theoretical interest to understand the function class that satisfies \eqref{FmuL_Rn}.

The inequality \eqref{FmuL_Rn} is known to hold for $D = \mathbb{R}^n$ if, and only if, $f \in \mathcal{F}_{0,L}(\mathbb{R}^n)$,
by results of Taylor, Glineur and Hendrickx \cite{Taylor_Glineur_Hendrickx_2017a} (see also Azagra and  Mudarra \cite{Azagra_Mudara_2017}).
In an earlier version of this paper, we asked if this is true for more general open convex sets $D \subset \mathbb{R}^n$,
but this turns out not to be the case: Drori \cite{drori2018}  recently constructed an example of a bivariate  function
with open domain, say $D$, such that  $f \in \mathcal{F}_{0,L}(D)$ with $L =1$, but where \eqref{FmuL_Rn} does not hold for all $x,y \in D$.
For this reason, item (c) in Lemma  \ref{lemm:Lsmooth}, cannot be changed to the stronger condition \eqref{FmuL_Rn}.

 Lemma \ref{lemm:Lsmooth} allows us to derive  the following necessary and sufficient conditions for membership of $\mathcal{F}_{\mu,L}(D)$.
The condition $(d)$ below is new, and will be used extensively in the proofs that follow.
\begin{theorem}
\label{thm:mu_L_H}
Let $D$ be an open convex set and $f:D\rightarrow \mathbb{R}$ be twice continuously differentiable. The following statements are equivalent:
\begin{itemize}
\item[(a)] $f$ is $\mu$-strongly convex and $L$-smooth on $D$, i.e.\ $f \in \mathcal{F}_{\mu,L}(D)$,
\item[(b)] $\mu I\preceq H(x) \preceq L I$ $\forall x\in D$,
\item[(c)] $f(x)-\frac{\mu}{2}\normsq{x}$ is convex and $(L-\mu)$-smooth on $D$,
\item[(d)] for all $x,y\in D$ we have
 \begin{equation}
 \label{gradient inequality}
 \langle{g(x)-g(y)},{x-y}\rangle\geq \frac{1}{1-\frac{\mu}{L}} \left(\frac{1}{L}\normsq{g(x)-g(y)}+\mu\normsq{x-y}
 -2\frac{\mu}{L}\langle{g(x)-g(y)},{x-y}\rangle\right).
 \end{equation}
\end{itemize}
\end{theorem}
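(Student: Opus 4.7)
The plan is to establish the cycle $(a)\Leftrightarrow(b)\Leftrightarrow(c)\Leftrightarrow(d)$ by leveraging Lemma \ref{lemm:Lsmooth}, together with the elementary observation that shifting $f$ by a quadratic simply shifts its Hessian and its gradient in a controlled way.

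First I would handle $(a)\Leftrightarrow(c)$, which is essentially just the definition of $\mu$-strong convexity. Writing $\tilde{f}(x)=f(x)-\tfrac{\mu}{2}\|x\|^2$, one has $\tilde{g}(x)=g(x)-\mu x$ and $\tilde{H}(x)=H(x)-\mu I$. By definition, $f\in\mathcal{F}_{\mu,L}(D)$ means precisely that $\tilde{f}$ is convex, together with $L$-smoothness of $f$. To reconcile this with (c), which instead asks for $(L-\mu)$-smoothness of $\tilde{f}$, I would pass through $(b)$: $L$-smoothness of $f$ plus convexity of $\tilde{f}$ is equivalent (by Lemma \ref{lemm:Lsmooth} applied to both $f$ and $\tilde{f}$) to $\mu I\preceq H(x)\preceq L I$, which in turn says $0\preceq \tilde{H}(x)\preceq (L-\mu)I$, i.e.\ that $\tilde{f}$ is convex and $(L-\mu)$-smooth. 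This simultaneously proves $(a)\Leftrightarrow(b)$ and $(a)\Leftrightarrow(c)$.

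Finally, for the key equivalence $(c)\Leftrightarrow(d)$, I would apply condition $(c)$ of Lemma \ref{lemm:Lsmooth} to $\tilde{f}$ with smoothness parameter $L-\mu$, obtaining
\[
\langle \tilde{g}(x)-\tilde{g}(y),\,x-y\rangle \;\geq\; \tfrac{1}{L-\mu}\,\|\tilde{g}(x)-\tilde{g}(y)\|^2 \qquad \forall x,y\in D.
\]
Substituting $\tilde{g}(x)-\tilde{g}(y)=g(x)-g(y)-\mu(x-y)$ and expanding both sides produces
\[
\langle g(x)-g(y),x-y\rangle - \mu\|x-y\|^2 \;\geq\; \tfrac{1}{L-\mu}\bigl(\|g(x)-g(y)\|^2 - 2\mu\langle g(x)-g(y),x-y\rangle + \mu^2\|x-y\|^2\bigr).
\]
Multiplying through by $L-\mu>0$, collecting the inner-product terms on the left, and dividing by $L$ yields exactly the inequality \eqref{gradient inequality} of part $(d)$. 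Conversely, the same algebraic steps run backwards show that $(d)$ implies condition $(c)$ of Lemma \ref{lemm:Lsmooth} for $\tilde{f}$ with parameter $L-\mu$, hence $(c)$ of the present theorem.

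The main conceptual step is recognising that $(d)$ is nothing more than the ``co-coercivity-type'' inequality of Lemma \ref{lemm:Lsmooth}(c) applied to the shifted function $\tilde{f}$; once this is spotted, the rest is routine algebra. The only place requiring genuine care is that the shift by $\tfrac{\mu}{2}\|x\|^2$ interacts correctly with both $L$-smoothness (which becomes $(L-\mu)$-smoothness of $\tilde f$) and convexity, so the case $\mu=L$ should be noted as degenerate (then $(d)$ forces $g$ to be affine with gradient operator $\mu I$, consistent with (b)).
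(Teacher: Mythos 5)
Your proof is correct and follows essentially the same route as the paper: the equivalences $(a)\Leftrightarrow(b)\Leftrightarrow(c)$ come from Lemma~\ref{lemm:Lsmooth} applied to $f$ and the shifted function $\tilde f(x)=f(x)-\tfrac{\mu}{2}\|x\|^2$, and $(c)\Leftrightarrow(d)$ is obtained by applying Lemma~\ref{lemm:Lsmooth}(c) to $\tilde f$ with parameter $L-\mu$ and substituting $\tilde g = g - \mu\,\mathrm{id}$. The paper compresses the first chain into a one-line remark and gives the same substitution for $(c)\Leftrightarrow(d)$; your write-up merely makes the bookkeeping explicit (and helpfully flags the degenerate case $\mu = L$, where \eqref{gradient inequality} is ill-posed).
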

\begin{proof}
The equivalences $(a)\Leftrightarrow (b)\Leftrightarrow(c)$ follow directly from Lemma \ref{lemm:Lsmooth} and the relevant definitions.

 $(c)\Leftrightarrow(d)$: Requiring $h(x)=f(x)-\frac{\mu}{2}\normsq{x}$ to be convex and $(L-\mu)$-smooth on $D$ can equivalently be formulated as requiring $h$ to satisfy
\[\langle{g_h(x)-g_h(y)},{x-y}\rangle \geq \frac{1}{L-\mu}\normsq{g_h(y)-g_h(x)}\] for all $x,y\in D$, where $g_h$ is the gradient of $h$. Equivalently:
\begin{align*}
(L-\mu)\left[\langle{g_f(x)-g_f(y)},{x-y}\rangle-\mu \normsq{x-y}\right]\geq &\normsq{g_f(y)-g_f(x)}+\mu^2\normsq{x-y}-2\mu\langle{g_f(y)-g_f(x)},{y-x}\rangle,
\end{align*}
which is exactly condition $(d)$ in the statement of the theorem.
\end{proof}

We note once more that the spectrum of the Hessian is not invariant under change in inner product, thus the values $\mu$ and $L$ are
intrinsically linked to the reference inner product. If $D = \mathbb{R}^n$, a stronger condition than condition $(d)$ in the last theorem holds, namely
\begin{equation}
\label{eq:FmuL}
f(y)-f(x) - \langle{g(x)},{y-x}\rangle \geq \frac{1}{2(1-\frac{\mu}{L})} \left(\frac{1}{L}\normsq{g(y)-g(x)}+\mu\normsq{x-y}-2\frac{\mu}{L}\langle{g(x)-g(y)},{x-y}\rangle\right).
\end{equation}
For the reasons discussed after Lemma \ref{lemm:Lsmooth}, the inequality \eqref{eq:FmuL} does not hold in general if $D \neq \mathbb{R}^n$, due to the results by Drori \cite{drori2018}.

\subsection{Self-concordance}
Self-concordant functions are special convex functions
introduced by Nesterov and Nemirovski \cite{int:Nesterov5}, that play a certral role in the analysis of interior point algorithms.
We will use the (slightly) more general definition by Renegar \cite{Renegar2001a}.

\begin{definition}[Self-concordant functional]
Let $f:D_f \rightarrow \mathbb{R}$ be such that $H(x) \succ 0$ for all $x \in D_f$, and let $B_x(x,1)$ denote the open unit ball centered at $x \in D_f$ for the $\|\cdot\|_x$ norm. Then
$f$ is called self-concordant  if:
\begin{enumerate}
\item
For all $x \in D_f$ one has $B_x(x,1) \subseteq D_f$;
\item
For all $y \in B_x(x,1)$ one has
\[
1 - \|y-x\|_x \le \frac{\|v\|_y}{\|v\|_x} \le \frac{1}{1 - \|y-x\|_x} \mbox{  for all } v \neq 0.
\]
\end{enumerate}
\end{definition}

An equivalent characterization of self-concordance, due to Renegar \cite{Renegar2001a}, is as follows.

\begin{theorem}[Theorem 2.2.1 in \cite{Renegar2001a}]
\label{thm:alt def sc}
Assume $f$ such that for all $x \in D_f$ one has $B_x(x,1) \subseteq D_f$.

Then $f$ is self-concordant if, and only if, for all $x \in D_f$ and $y \in B_x(x,1)$:
\begin{equation}
\label{def:altSC}
\max\left\{\|H_x(y)\|_x, \|H_x(y)^{-1}\|_x\right\} \le \frac{1}{(1 - \|y-x\|_x)^2}.
\end{equation}
\end{theorem}

This alternative characterization allows us to establish a new link between
self-concordant and $L$-smooth, $\mu$-strongly convex functions.

\begin{corollary}
\label{thm:sc_vs_FmuL}
Assume $f:D_f \rightarrow \mathbb{R}$ is self-concordant, $x \in D_f$, and $\delta < 1$.
Then $f \in F_{\mu,L}(D)$ with respect to the inner product $\langle \cdot,\cdot\rangle_x$, if
$
D = \left\{y \; | \; \|y-x\|_x <\delta  \right\} = B_x(x,\delta),
$
$
\mu = (1-\delta)^2$, and $L = \frac{1}{(1-\delta)^2}$.

Conversely, assume $f:D_f \rightarrow \mathbb{R}$ is twice continuously differentiable,
and $H(x) \succ 0$ for all $x \in D_f$. If, for all $x \in D_f$ and $\delta \in (0,1)$, it holds that
\begin{enumerate}
  \item
  $D := B_x(x,\delta) \subset D_f$,
  \item
   $f \in F_{\mu,L}(D)$ with respect to the inner product $\langle \cdot,\cdot\rangle_x$, and
$
\mu = (1-\delta)^2$ and $L = \frac{1}{(1-\delta)^2}$,
\end{enumerate}
then $f$ is self-concordant.
\end{corollary}
\begin{proof}
For the first implication, observe that for a self-concordant $f$,
Theorem \ref{thm:alt def sc} implies the spectrum of $H_x(y)$ is contained in the interval
$\left[(1 - \|y-x\|_x)^2,\frac{1}{(1 - \|y-x\|_x)^2}\right]$, which in turn is contained
in  $\left[(1 - \delta)^2,\frac{1}{(1 - \delta)^2}\right]$ for all $y \in B_x(x,\delta)$.
Theorem \ref{thm:mu_L_H} now yields the required result.

To prove the converse, assume $y \in B_x(x,1)$ and set $\delta = \|x-y\|_x$, $D = B_x(x,\delta)$,
$\mu = (1-\delta)^2$, and $L = \frac{1}{(1-\delta)^2}$.

 Since  $f \in F_{\mu,L}(D)$ by assumption,
it holds that
\[
\mu I \preceq H_x(y) \preceq L I,
\]
which is the same as the condition \eqref{def:altSC} that guarantees self-concordance.
\end{proof}

\section{Performance estimation problems}
\label{sec:Performance estimation problems}
Performance estimation problems, as introduced by Drori and Teboulle \cite{drori2014}, are semidefinite programming (SDP) problems that
bound the worst-case performance of certain iterative optimization algorithms. Essentially, the goal is to find the
objective function from a given function class, that exhibits the worst-case behavior  for a given iterative algorithm.

In what follows, we list the SDP performance estimation problems that we will employ to study worst-case bounds for one iteration of gradient methods,
applied to a smooth, strongly convex $f$ that admits a minimizer, denoted by $x_*$.
These performance estimation problems have variables that correspond to (unknown) iterates $x_0$ and $x_1$, the minimizer $x_*$, as well
as the gradients and function values at these points, namely $g_i$   correspond to $g(x_i)$ ($i \in \{*,0,1\})$), and
$f_i$  correspond to $f(x_i)$ ($i \in \{*,0,1\})$). We may assume $x_* = g_* = 0$ and $f_* = 0$ without loss of generality.

The objective is
to identify the worst-case after one iteration is performed, namely to find the maximum value of either
$f_1 - f_*$, $\|g_1\|$, or $\|x_1-x_*\|$,
given an upper bound on the initial value of one of the quantities $f_0 - f_*$, $\|g_0\|$, or $\|x_0-x_*\|$ (this upper bound will be denoted $R$ below).

Note again that the norm may be any induced norm on $\mathbb{R}^n$.

Note that we only consider one iteration, i.e.\ the transition from $x_0$ to $x_1$, whereas
SDP performance estimation is typically used to study several iterations; see e.g.\ \cite{drori2014,Taylor_Glineur_Hendrickx_2017a}.
For our purposes, it suffices to consider one iteration --- we will elaborate on this later on. For the time being,
we consider functions where the domain is all of $\mathbb{R}^n$, and will move to restricted domains later on.

\subsection*{Performance estimation with exact line search}
\begin{itemize}
\item
Parameters: $L \ge \mu > 0$, $R > 0$;
\item
Variables:
$\left\{({x}_i,{g}_i,f_i) \right\}_{i\in S}$ $\;\; (S = \{*,0,1\})$.
\end{itemize}

\subsubsection*{Worst-case function value}
\begin{equation}
\label{pep:els_f}
\left.
\begin{array}{rl}
\max \ &f_1-f_* \\
\mbox{s.t. } &
f_i - f_j - \langle{{g}_j},{{x}_i-{x}_j}\rangle \geq \frac{1}{2(1-\mu/L)}\left( \frac{1}{L}\norm{{g}_i-{g}_j}^2+ \mu \norm{{x}_i-{x}_j}^2 - 2\frac{\mu}{L} \langle{{g}_j-{g}_i},{{x}_j-{x}_i}\rangle\right) \; \forall i,j \in S\\
& {g}_* = {0}  \\
&\langle {x}_{1}-{x}_0, {g}_1\rangle = 0 \\
 & \langle{g}_0 , {g}_1\rangle \le \varepsilon \|g_0\|\|g_1\| \\
&f_0-f_* \leq R  \\
\end{array}
\right\}
\end{equation}

The first constraint corresponds to
\eqref{eq:FmuL}, and models the necessary condition for $f \in {\mathcal{F}}_{\mu,L}(\mathbb{R}^n)$.
The second constraint corresponds to the fact that the gradient is zero at a minimizer. The third  constraint
is the well-known property of exact line search,
while the fourth constraint is satisfied
if the approximate gradient condition \eqref{eq:error} holds.
Finally, the fifth constraint ensures that the problem is bounded.

Note that the resulting problem may be written as an SDP problem, with $4\times 4$ matrix variable given
by the Gram matrix of the vectors $x_0,x_1,g_0,g_1$ with respect to the reference inner product.
In particular the fourth contraint may be written as the linear matrix inequality:
\begin{equation}
\label{noisy lmi}
\left(
     \begin{array}{cc}
      \varepsilon \|{g}_0\|^2 & \langle{g}_0, {g}_1\rangle\\
\langle{g}_0, {g}_1\rangle & \varepsilon \|{g}_1\|^2 \\
     \end{array}
   \right)
\succeq 0.
\end{equation}

Also note that the optimal value of the resulting SDP problem is independent of the inner product.
The SDP problem \eqref{pep:els_f} was first studied in \cite{Klerk_Glineur_Taylor_2016}.

\subsubsection*{Worst-case gradient norm}
The second variant of performance estimation is to find the worst case convergence of the gradient norm.
\begin{equation}
\label{pep:els_g}
\left.
\begin{array}{rll}
\max \ &\|g_1\|^2 &\\
\mbox{s.t. } &
{\langle{g_i-g_j},{x_i-x_j}\rangle\geq \frac{1}{1-\frac{\mu}{L}}\left(\frac{1}{L}\|{g_i-g_j}\|^2+\mu\|{x_i-x_j}\|^2-2\frac{\mu}{L}\langle{g_i-g_j},{x_i-x_j}\rangle\right)}  & \forall i,j \in S\\
& {g}_* = {0}  &\\
&\langle {x}_{1}-{x}_0, {g}_1\rangle = 0 &\\
& \langle{g}_0 , {g}_1\rangle \le \varepsilon \|g_0\|\|g_1\| &\\
&\|g_0\|^2 \leq R & \\
\end{array}
\right\}
\end{equation}

\subsubsection*{Worst-case distance to optimality}
The third variant of performance estimation is to find the worst case convergence of the distance to optimality.
\begin{equation}
\label{pep:els_x}
\left.
\begin{array}{rll}
\max \ &\|x_1-x_*\|^2 &\\
\mbox{s.t. } &
{\langle{g_i-g_j},{x_i-x_j}\rangle\geq \frac{1}{1-\frac{\mu}{L}}\left(\frac{1}{L}\|{g_i-g_j}\|^2+\mu\|{x_i-x_j}\|^2-2\frac{\mu}{L}\langle{g_i-g_j},{x_i-x_j}\rangle\right)} & \forall i,j \in S\\
& {g}_* = {0}  &\\
&\langle {x}_{1}-{x}_0, {g}_1\rangle = 0 &\\
& \langle{g}_0 , {g}_1\rangle \le \varepsilon \|g_0\|\|g_1\| &\\
&\|x_0-x_\star \|^2 \leq R & \\
\end{array}
\right\}
\end{equation}
In what follows we will give upper bounds on the optimal values of these performance estimation SDP problems.

\subsection*{PEP with fixed step sizes}
For fixed step sizes, the performance estimation problems \eqref{pep:els_f}, \eqref{pep:els_g}, and \eqref{pep:els_x}
change as follows:
\begin{enumerate}
\item
for given step size $\gamma > 0$, the condition $x_1 = x_0 - \gamma d$ is used to eliminate $x_1$,
where $d$ is the approximate gradient at $x_0$.
\item
The vector $d$ is viewed as a variable in the performance estimation problem.
\item
The exact line search condition, $\langle {x}_{1}-{x}_0, {g}_1\rangle = 0$, is omitted.
\item
The condition
$
\normsq{d-g_0}\leq \varepsilon^2\normsq{g_0}
$
is added, that corresponds to \eqref{eq:error}, and $\langle{g}_0 , {g}_1\rangle \le \varepsilon \|g_0\|\|g_1\|$ is omitted.
\end{enumerate}

\section{Error bounds from performance estimation}
\label{sec:Error bounds from performance estimation}
The optimal values of the performance estimation problems in the last section give bounds on the worst-case convergence rates of the gradient method for different performance measures. In this section, we provide bounds that were obtained using the solutions to the previously presented performance estimation problems. As the bounds we present below are not always the \emph{tightest} ones, each result is provided with comments regarding tightness.


To the best of our knowledge, there are no results dealing with the exact same settings in the literature.
Among others, one can find detailed analyses of first-order method under deterministic
uncertainty models in~\cite{Daspremont2008,Devolder2014,Schmidt2011}. Related results involving relative inaccuracy
 (as in this work) can be found in  older works by Polyak \cite{Polyak1971} where  the focus is on smooth convex minimization.

To give some meaningful comparison, we will compare our results to standard ones for the simpler case $\varepsilon=0$.

\subsection{PEP with exact line search}
The first result concerns error bounds for the inexact gradient method with exact line
 search when no restriction is applied on the domain.
 This problem, originally due to Cauchy, was studied in detail in \cite{Klerk_Glineur_Taylor_2016} using SDP performance estimation.
 Here we will generalize the main result from \cite{Klerk_Glineur_Taylor_2016} to include arbitrary inner products.
 The Cauchy problem remains of enduring interest; see e.g.\ the recent work by Bolte and Pauwels \cite[\S 5.3]{Bolte2020}.

\begin{theorem}
\label{thm:ELS improvements}
Consider the inexact gradient method with exact line search applied to some $f \in \mathcal{F}_{\mu,L}(\mathbb{R}^n)$.
If $\kappa := \frac{\mu}{L}$ {(the inverse condition number: $\kappa \in (0,1]$)} and $\varepsilon \in \left[0, \frac{2\sqrt{\kappa}}{1+\kappa}\right]$, one has
\begin{eqnarray*}
{f(x_1)} - f(x_*)&\le & \left(\frac{1-\kappa +\varepsilon(1-\kappa)}{1+\kappa +\varepsilon(1-\kappa)}\right)^2 ({f(x_0)}-f(x_*)), \\
\|g(x_1)\| &\le & \left(\varepsilon+ \sqrt{1-{\varepsilon}^2}\frac{1-\kappa}{2\sqrt{\kappa}}\right)\|{g(x_0)}\|, \\
\|{x_1-x_*}\| & \le & \left(\varepsilon+ \sqrt{1-{\varepsilon}^2}\frac{1-\kappa}{2\sqrt{\kappa}}\right)\|{x_0-x_*}\|. \\
\end{eqnarray*}
\end{theorem}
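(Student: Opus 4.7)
The plan is to prove each of the three inequalities in the theorem as an upper bound on the optimal value of the corresponding performance estimation SDP, namely \eqref{pep:els_f}, \eqref{pep:els_g} and \eqref{pep:els_x}, using weak duality: for each SDP I would exhibit a nonnegative combination of the constraints that produces the claimed inequality. The raw ingredients are the interpolation inequality from condition (d) of Theorem \ref{thm:mu_L_H} applied to the three pairs $(x_0,x_*)$, $(x_1,x_*)$ and $(x_0,x_1)$, the exact line-search identity $\langle x_1-x_0,g_1\rangle=0$, and the inexact-gradient inequality $\langle g_0,g_1\rangle\le\varepsilon\|g_0\|\|g_1\|$. For the function-value bound, the stronger inequality \eqref{eq:FmuL} must also be used on the same three pairs of points, since $f_1-f_*$ only appears there.

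I would tackle the gradient-norm bound first, as it is the simplest. Writing $\kappa=\mu/L$, I expect a carefully chosen nonnegative combination of the three ``condition (d)'' inequalities, the line-search equality, and the inexactness inequality to collapse into a quadratic relation of the form $\|g_1\|^2 \le c_1\|g_0\|^2 + c_2\,\langle g_0,g_1\rangle$, after which bounding the cross term by $\varepsilon\|g_0\|\|g_1\|$ and solving a quadratic in $\|g_1\|/\|g_0\|$ should yield the factor $\varepsilon+\sqrt{1-\varepsilon^2}\frac{1-\kappa}{2\sqrt{\kappa}}$; the structure of this expression strongly suggests an underlying trigonometric parametrization with $\cos\theta=\varepsilon$. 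The distance-to-optimality bound features the same contraction factor, which points to an essentially parallel argument with $\|x_1-x_*\|$ playing the role of $\|g_1\|$. For the function-value bound I would look for multipliers that are a perturbation in $\varepsilon$ of those already identified for the exact case $\varepsilon=0$ in \cite{Klerk_Glineur_Taylor_2016}.

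The main obstacle is guessing the right dual multipliers. The inexactness constraint is really an LMI (see \eqref{noisy lmi}) rather than a scalar inequality, so its dual variable is a $2\times 2$ positive semidefinite matrix, and eliminating the cross term $\langle g_0,g_1\rangle$ requires a Schur-complement step --- which is the technical source of the $\sqrt{1-\varepsilon^2}$ factor in the stated bound, as well as the reason the range $\varepsilon\in[0,\tfrac{2\sqrt{\kappa}}{1+\kappa}]$ appears (this is where the Schur-complement argument remains valid). A pragmatic route is to solve each SDP numerically for several values of $(\kappa,\varepsilon)$, read off the structure of the optimal multipliers, conjecture a closed form in $\kappa$ and $\varepsilon$, and verify it algebraically. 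To confirm tightness of the bounds --- so that they are exact worst-case rates rather than merely upper bounds --- I would construct an explicit matching objective: a two-dimensional quadratic with eigenvalues $\mu$ and $L$ along orthogonal axes, together with a specific adversarial choice of the noise vector $d_0-g_0$ that simultaneously saturates the active dual constraints.
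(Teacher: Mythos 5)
Your plan matches the paper's proof in all essentials: the paper does indeed establish the second and third inequalities by exhibiting explicit nonnegative Lagrange multipliers for the constraints of \eqref{pep:els_g} and \eqref{pep:els_x} (found exactly as you describe, by solving the SDPs numerically and guessing closed forms --- see the paper's footnote), and for the inexactness constraint it uses the LMI form \eqref{noisy lmi} with a $2\times 2$ positive semidefinite multiplier $S$, whose determinant vanishes and whose $(2,2)$-entry nonnegativity is precisely what produces the range $\varepsilon\in[0,\tfrac{2\sqrt{\kappa}}{1+\kappa}]$, just as you anticipate.

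Two small differences worth noting. First, the paper does not re-derive the function-value inequality at all: it observes that \eqref{pep:els_f} is identical to the PEP of \cite{Klerk_Glineur_Taylor_2016} and cites Theorem~1.2 there, so your plan to re-prove it with \eqref{eq:FmuL}-based multipliers is extra work. Second, after aggregating constraints the paper does not pass through a scalar relation of the type $\|g_1\|^2\le c_1\|g_0\|^2+c_2\langle g_0,g_1\rangle$ and solve a quadratic in $\|g_1\|/\|g_0\|$; instead the trace against the rank-one matrix $S$ is taken directly, and the aggregated expression is rewritten as the target bound minus a manifestly nonnegative completed square --- a pure sum-of-squares certificate with no auxiliary quadratic to solve. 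Your ``solve the quadratic'' route would also work, but it is not the mechanism the paper uses, and the $\sqrt{1-\varepsilon^2}$ factor comes simply from the explicit entries of $S$ rather than from a Schur-complement step. Finally, the paper only proves the upper bounds; your proposed construction of matching worst-case instances to show tightness is not attempted in the paper and is not needed for the theorem as stated.
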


\begin{proof}
The three inequalities in the statement of the theorem follow
 from corresponding upper bounds on the optimal values of the SDP performance estimation problems \eqref{pep:els_f}, \eqref{pep:els_g}, and \eqref{pep:els_x},
  respectively.

The first of the SDP performance estimation problems, namely \eqref{pep:els_f}, is exactly the same as the one in \cite{Klerk_Glineur_Taylor_2016} (see (8) there).
The first inequality therefore follows from
 Theorem 1.2 in \cite{Klerk_Glineur_Taylor_2016}.

It remains to demonstrate suitable upper bounds on the SDP performance estimation problems  \eqref{pep:els_g}, and \eqref{pep:els_x}.
This is done by aggregating the constraints of these respective SDP problems by using suitable (Lagrange)  multipliers.\footnote{The
multipliers used in the proof were obtained by solving the SDP problems
 \eqref{pep:els_g} and \eqref{pep:els_x} numerically for different values of $\mu$ and $L$ (one may assume w.l.o.g.\ that $R=1$), and subsequently
guessing the correct analytical expressions of the multipliers by looking at the optimal solution of the dual SDP problem.
 For example it quickly became clear that the second multiplier was $(L+\mu)$,
and that the matrix $S$  always had rank one. The correctness of these expressions for the multipliers is verified in the proof.
 Thus numerical computations were only used to
find the proof of Theorem \ref{thm:ELS improvements}, i.e.\ to guess the correct analytical expressions for the multipliers, and play no role in the proof itself.}

To this end, consider the following constraints from \eqref{pep:els_g} and their associated multipliers:
\begin{align*}
&{\langle{g_0-g_1},{x_0-x_1}\rangle\geq \frac{1}{1-\frac{\mu}{L}}\left(\frac{1}{L}\|{g_0-g_1}\|^2+\mu\|{x_1-x_0}\|^2-2\frac{\mu}{L}\langle{g_0-g_1},{x_0-x_1}\rangle\right)}  &&:{(L-\mu)}\lambda,\\
&\langle{g_1},{x_1-x_0}\rangle= 0 && :L+\mu,\\
&\begin{pmatrix}
\epsilon ||g_0||^2 & \langle{g_0},{g_1}\rangle\\
\langle{g_0},{g_1}\rangle & \epsilon ||g_1||^2
\end{pmatrix}\succeq 0 &&:S,
\end{align*}
with $S=\begin{pmatrix}
s_{11} & s_{12}\\ s_{12} & s_{22}
\end{pmatrix},$ and:
\begin{align*}
&\lambda=\frac{2  \varepsilon \sqrt{\kappa}}{\sqrt{1 - {\varepsilon}^2} \left(1 - \kappa\right)} + 1,
& \\
&s_{11}=\frac{3 \varepsilon}{2} - \frac{\varepsilon \left(\kappa + \frac{1}{\kappa}\right)}{4} + \frac{1-\kappa}{2 \sqrt{\kappa(1 - {\varepsilon}^2)}} - \frac{{\varepsilon}^2 \left(1 -  \kappa\right)}{\sqrt{\kappa(1 - {\varepsilon}^2)}},
&\\
&s_{22}= \frac{2  \sqrt{\kappa(1 - {\varepsilon}^2)}-\varepsilon \left(1 - \kappa\right)}{\left(1-{\varepsilon}^2\right) \left(1 - \kappa\right) + 2 \varepsilon \sqrt{\kappa(1 - {\varepsilon}^2)}}, &
\\
&s_{12}=
\frac{\varepsilon\, \left(1 - \kappa\right)}{2  \sqrt{\kappa(1 - {\varepsilon}^2)}} - 1.&
\end{align*}
{Assuming the corresponding multipliers are of appropriate signs (see discussion below), the proof consists in reformulating the following weighted sum of the previous inequalities (the validity of this inequality follows from the signs of the multipliers):
\begin{equation}\label{eq:weightedsum_grad}
\begin{aligned}
0\geq &(L-\mu)\lambda\left[\frac{1}{1-\frac{\mu}{L}}\left(\frac{1}{L}\|{g_0-g_1}\|^2+\mu\|{x_1-x_0}\|^2-2\frac{\mu}{L}\langle{g_0-g_1},{x_0-x_1}\rangle\right)-\langle{g_0-g_1},{x_0-x_1}\rangle\right]\\&+(L+\mu)\left[\langle{g_1},{x_1-x_0}\rangle\right]-\mathrm{Trace}\left(\begin{pmatrix}
s_{11} & s_{12} \\ s_{12} & s_{22}
\end{pmatrix}\begin{pmatrix}
\epsilon ||g_0||^2 & \langle{g_0},{g_1}\rangle\\
\langle{g_0},{g_1}\rangle & \epsilon ||g_1||^2
\end{pmatrix} \right).
\end{aligned}
\end{equation}
We first show that  multipliers are nonnegative (resp.\ positive semidefinite) where required; that is, $(L-\mu)\lambda\geq 0$ and $S\succeq 0$.
 The nonnegativity of the first term is clear from $0 < \mu \le L$ and $\lambda \ge 0$.
 Concerning $S$, let us note that $s_{22}\geq 0\Leftrightarrow \varepsilon\in\left[\frac{-2\sqrt{\kappa}}{1+\kappa},\frac{2\sqrt{\kappa}}{1+\kappa}\right]$.}
When $\varepsilon < \frac{2\sqrt{\kappa}}{1+\kappa}$, $s_{22}$ ensures that there exists a positive eigenvalue for $S$, since $s_{22}>0$.
In order to prove that both eigenvalues of $S$ are nonnegative, one may verify:
\begin{equation*}
\det S=s_{11}s_{22}-s_{12}^2=0.
\end{equation*}
{Therefore, one eigenvalue of $S$ is positive and the other one is zero when $\varepsilon < \frac{2\sqrt{\kappa}}{1+\kappa}$,
and in the simpler case $\varepsilon = \frac{2\sqrt{\kappa}}{1+\kappa}$, we have $S=0$, and hence the inequality~\eqref{eq:weightedsum_grad} is valid.}

{Reformulating the valid inequality~\eqref{eq:weightedsum_grad} yields:}
\begin{align*}
\|{g_1}\|^2\leq &\left(\varepsilon+ \sqrt{1-{\varepsilon}^2}\frac{1-\kappa}{2\sqrt{\kappa}}\right)^2 \|{g_0}\|^2\\
&-\kappa \frac{2 \varepsilon \sqrt{\kappa}+(1-\kappa)\sqrt{1-\varepsilon^2} }{(1-\kappa)\sqrt{1-\varepsilon^2}}
\normsq{\frac{\varepsilon (1+\kappa)}{\sqrt{\kappa} \left(\sqrt{1-\varepsilon^2}(1-\kappa) +2 \varepsilon \sqrt{\kappa}\right)}g_1
-\frac{1+\kappa}{2 \kappa}g_0 +L(x_0-x_1)},\\
\leq &\left(\varepsilon+ \sqrt{1-{\varepsilon}^2}\frac{1-\kappa}{2\sqrt{\kappa}}\right)^2 \|{g_0}\|^2,
\end{align*}
where the last inequality follows from the sign of the coefficient:
\[\kappa \frac{2 \varepsilon \sqrt{\kappa}+(1-\kappa)\sqrt{1-\varepsilon^2} }{(1-\kappa)\sqrt{1-\varepsilon^2}}\geq 0.\]
Next, we prove the exact same guarantee as for the gradient norm, but in the case of distance to optimality $\|{x_1-x_*}\|^2$.

Let us consider the following constraints from \eqref{pep:els_x} with associated multipliers:
\begin{align*}
&{\frac{1}{1-\frac{\mu}{L}}\left(\frac{1}{L}\|{g_0}\|^2+\mu\|{x_0-x_*}\|^2-2\frac{\mu}{L}\langle{g_0},{x_0-x_*}\rangle\right)+\langle{g_0},{x_*-x_0}\rangle\leq 0} &&:\lambda_{0},\\
&{\frac{1}{1-\frac{\mu}{L}}\left(\frac{1}{L}\|{g_1}\|^2+\mu\|{x_1-x_*}\|^2-2\frac{\mu}{L}\langle{g_1},{x_1-x_*}\rangle\right)+\langle{g_1},{x_*-x_1}\rangle\leq 0} &&:\lambda_{1}, \\
&{\langle{g_1},{x_1-x_0}\rangle\leq 0} && :{\lambda_2},\\
&\begin{pmatrix}
\epsilon ||g_0||^2 & \langle{g_0},{g_1}\rangle\\
\inner{g_0}{g_1} & \epsilon ||g_1||^2
\end{pmatrix}\succeq 0 &&:S,
\end{align*}
with $S=\begin{pmatrix}
s_{11} & s_{12}\\ s_{12} & s_{22}
\end{pmatrix},$ and:
\begin{align*}
&\lambda_{0}=\frac{1-\kappa}{\mu}\left[1-2\varepsilon^2+\frac{\varepsilon\sqrt{1-\varepsilon^2}}{2\sqrt{\kappa}(1-\kappa)}(-1-\kappa^2+6\kappa)\right],\\
&\lambda_1=\frac{1}{\mu}-\frac{1}{L},\\
& {\lambda_2}=\frac{1}{\mu}+\frac{1}{L},\\
&L\mu s_{11}=\frac{3 \varepsilon}{2} - \frac{\varepsilon \left(\kappa + \frac{1}{\kappa}\right)}{4} + \frac{1-\kappa}{2 \sqrt{\kappa(1 - {\varepsilon}^2)}} - \frac{{\varepsilon}^2 \left(1 -  \kappa\right)}{\sqrt{\kappa(1 - {\varepsilon}^2)}},
&\\
&L\mu s_{22}= \frac{2  \sqrt{\kappa(1 - {\varepsilon}^2)}-\varepsilon \left(1 - \kappa\right)}{\left(1-{\varepsilon}^2\right) \left(1 - \kappa\right) + 2 \varepsilon \sqrt{\kappa(1 - {\varepsilon}^2)}}, &
\\
&L\mu s_{12}=
\frac{\varepsilon\, \left(1 - \kappa\right)}{2  \sqrt{\kappa(1 - {\varepsilon}^2)}} - 1.&
\end{align*}
{As in the case of the gradient norm, we proceed by reformulating the weighted sum of the constraints. For doing that, we first check nonnegativity of the weights $\lambda_0,\lambda_1,\lambda_2\geq 0$ and $S\succeq 0$.}

Similarly to the previous case, $s_{22}\geq 0\Leftrightarrow \varepsilon\in\left[\frac{-2\sqrt{\kappa}}{1+\kappa},\frac{2\sqrt{\kappa}}{1+\kappa}\right].$
 We therefore only need to check the sign of $\lambda_0$ in order to have the desired results (the $S\succeq 0$ requirement is the same as for the convergence in gradient norm, and the others are {easily} verified).
Concerning $\lambda_0$, we have
\[ \lambda_0\geq 0 \Leftrightarrow  \frac{\kappa-1}{\kappa+1} \leq \varepsilon\leq \frac{2\sqrt{\kappa}}{\kappa+1},\] with $\frac{\kappa-1}{\kappa+1} \leq  0$, {and hence $\lambda_0\geq 0$ in the region of interest}.

Aggregating the constraints with {the corresponding} multipliers yields:
\begin{align*}
\|{x_1-x_*}\|^2\leq &\left(\varepsilon+\sqrt{1-\varepsilon^2}\frac{1-\kappa}{2\sqrt{\kappa}}\right)^2 \|{x_0-x_*}\|^2 \\ &- \frac{2 \varepsilon \sqrt{\left(1-\varepsilon^2\right) \kappa}+\left(1-\varepsilon^2\right) (1-\kappa)}{\kappa(1-\kappa)}\times\\ &\normsq{\left(1-\frac{\varepsilon (1-\kappa)}{2 \sqrt{\left(1-\varepsilon^2\right) \kappa}}\right) \frac{g_0}{L} -\frac{1+\kappa}{2}(x_0-x_*)+\frac{1-\kappa}{2 \varepsilon \sqrt{\left(1-\varepsilon^2\right) \kappa}+\left(1-\varepsilon^2\right) (1-\kappa)}\frac{g_1}{L}},\\
\leq  &\left(\varepsilon+\sqrt{1-\varepsilon^2}\frac{1-\kappa}{2\sqrt{\kappa}}\right)^2 \|{x_0-x_*}\|^2,
\end{align*}
{where the last inequality follows from the sign of the coefficient
\[ \frac{2 \varepsilon \sqrt{\left(1-\varepsilon^2\right) \kappa}+\left(1-\varepsilon^2\right) (1-\kappa)}{\kappa(1-\kappa)}\geq 0.\]
This completes the proof.}
 \end{proof}

 \medskip
 Theorem~\ref{thm:ELS improvements} provides both tight and non-tight results, as follows:
\begin{enumerate}
	\item the result in function values cannot be improved, by~\cite[Example 5.2]{Klerk_Glineur_Taylor_2016};
	\item likewise, the result in gradient norm cannot be improved; we give an example proving this in Appendix \ref{appendix:B}.
	\item The result in distance to optimality is not tight.
\end{enumerate}
Our bounds on the rates satisfy
\[\left(\frac{1-\kappa+\varepsilon(1-\kappa)}{1+\kappa+\varepsilon(1-\kappa)}\right)^2\leq \left(\varepsilon + \sqrt{1-\varepsilon^2}\dfrac{1-\kappa}{2\sqrt{\kappa}}\right)^2,\]
where the left hand term is the optimal value of~\eqref{pep:els_f} and the right hand term is the optimal value of~\eqref{pep:els_g}.

 We may compare our results to known (classical) results with $\varepsilon = 0$. In that case, we have that the best possible rate for function value is given by (see~\cite[Theorem 5.2]{Klerk_Glineur_Taylor_2016}),
	\[f(x_1)-f(x_*)\leq \left(\frac{1-\kappa}{1+\kappa}\right)^2(f(x_0)-f(x_*)).\] By smoothness and strong convexity, we derive in a standard way for the gradient norm (the exact same reasoning holds for the distance to optimum)
\[ ||g(x_1)|| \leq \dfrac{1}{\sqrt{\kappa}} \left(\frac{1-\kappa}{1+\kappa}\right)  ||g(x_0)||,\] whereas Theorem \ref{thm:ELS improvements} provides the strictly better guarantee for $\kappa\in (0,1)$, namely:
\[||g(x_1)|| \leq \dfrac{1}{2\sqrt{\kappa}} \left( 1-\kappa \right)||g(x_0)||.\]
The above rates are valid when performing one iteration. Better rates can be guaranteed if more than one iteration is performed, which can be done
in the same framework. However, we do not pursue our
 investigations in that direction, as the subsequent analysis of
  Newton’s method only requires the best possible one-iteration inequalities, as provided by the above new improved bounds.

 One has the following variation on Theorem \ref{thm:ELS improvements} that deals with the case where $f \in \mathcal{F}_{\mu,L}(D)$ for some open convex set $D \subset \mathbb{R}^n$.
 \begin{theorem}
\label{thm:ELS improvements D}
Consider the inexact gradient method with exact line search applied to some twice continuously differentiable $f \in \mathcal{F}_{\mu,L}(D)$
where $D \subset \mathbb{R}^n$ is open and convex,
from a starting point $x_0 \in D$. Assume that $\left\{x \in \mathbb{R}^n \; | \; f(x) \le f(x_0)\right\} \subset D$.
If $\kappa := \frac{\mu}{L} \in (0,1]$ and $\varepsilon \in \left[0, \frac{2\sqrt{\kappa}}{1+\kappa}\right]$, one has
\begin{eqnarray*}
\|g(x_1)\| &\le & \left(\varepsilon+ \sqrt{1-{\varepsilon}^2}\frac{1-\kappa}{2\sqrt{\kappa}}\right)\|{g(x_0)}\|, \\
\|{x_1-x_*}\| & \le & \left(\varepsilon+ \sqrt{1-{\varepsilon}^2}\frac{1-\kappa}{2\sqrt{\kappa}}\right)\|{x_0-x_*}\|. \\
\end{eqnarray*}
\end{theorem}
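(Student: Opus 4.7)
The plan is to reduce Theorem \ref{thm:ELS improvements D} to the relevant portion of Theorem \ref{thm:ELS improvements}. The two bounds claimed here (for $\|g(x_1)\|$ and $\|x_1 - x_*\|$) are established in the proof of Theorem \ref{thm:ELS improvements} via Lagrangian aggregation of the constraints of the performance estimation problems \eqref{pep:els_g} and \eqref{pep:els_x}. The only function-class constraint used is the pairwise gradient inequality \eqref{gradient inequality}, i.e.\ condition $(d)$ of Theorem \ref{thm:mu_L_H}, which is valid on any open convex $D$. By contrast, the function value bound would require the stronger inequality \eqref{eq:FmuL}, known only for $D = \mathbb{R}^n$, which explains its absence from the present statement.

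To apply the gradient inequality between the pairs $(x_0,x_1)$, $(x_0,x_*)$, $(x_1,x_*)$, I need all three points to lie in $D$. Trivially $x_0 \in D$, and $x_* \in D$ as the minimizer of $f$ on $D$. The substantive task is showing $x_1 \in D$. From \eqref{eq:error} with $\varepsilon < 1$,
\[
\langle g(x_0), d_0\rangle \ge (1-\varepsilon)\|g(x_0)\|^2 > 0
\]
(assuming $g(x_0) \ne 0$, else $x_0 = x_*$ and the conclusion is immediate), so $-d_0$ is a descent direction at $x_0$.

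Now consider $\phi(\gamma) := f(x_0 - \gamma d_0)$. The sublevel set $S_0 := \{x : f(x) \le f(x_0)\}$ is convex (as a sublevel set of a convex function), bounded by $\mu$-strong convexity of $f$, and contained in $D$ by hypothesis. Therefore $I^* := \{\gamma \ge 0 : x_0 - \gamma d_0 \in S_0\}$ is a bounded interval $[0, \gamma^R]$, the segment $\{x_0 - \gamma d_0 : \gamma \in I^*\}$ lies in $S_0 \subset D$, and $\phi$ is strictly convex, finite, and continuous on $I^*$. Since $\phi'(0) = -\langle g(x_0), d_0\rangle < 0$, the unique minimizer of $\phi$ lies in $(0,\gamma^R)$, yielding $x_1 \in S_0 \subset D$. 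With $x_0, x_1, x_* \in D$ secured, all constraints of the performance estimation problems \eqref{pep:els_g} and \eqref{pep:els_x} are satisfied (the remaining constraints are purely algebraic and independent of the domain), so the Lagrangian aggregation in the proof of Theorem \ref{thm:ELS improvements} applies verbatim to produce the two claimed bounds. The main obstacle is the domain-containment argument via the line search above; everything else is inherited from Theorem \ref{thm:ELS improvements}.
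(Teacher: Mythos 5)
Your proof is correct and follows essentially the same two-observation approach as the paper: the gradient-norm and distance bounds from Theorem \ref{thm:ELS improvements} rely only on the pairwise inequality \eqref{gradient inequality} (hence remain valid on any open convex $D$ by Theorem \ref{thm:mu_L_H}), and the sublevel-set hypothesis together with exact line search keeps $x_1 \in D$. Your descent-direction argument for $x_1 \in D$ is more detailed than the paper's one-line assertion, but it is the same idea.
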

\begin{proof}
The proof follows from the proof of Theorem \ref{thm:ELS improvements} after the following observations:
\begin{enumerate}
\item
The proof of the last two inequalities in Theorem \ref{thm:ELS improvements} only relies on the inequality \eqref{gradient inequality},
 which holds for any open, convex  $D \subset \mathbb{R}^n$, i.e.\ not only for $D = \mathbb{R}^n$, by Theorem \ref{thm:mu_L_H}.
\item
By the assumption on the level set of $f$, exact line search yields a point $x_1 \in D$, as required.
\end{enumerate}
\end{proof}

\medskip
Concerning tightness and comparisons with known results, the same remarks as for  Theorem~\ref{thm:ELS improvements} apply here.
Although the setting is nonstandard for first-order methods,
comparisons made for the case $D=\mathbb{R}^n$ are still valid as the worst-case bounds for gradient and distance are the same
 (i.e., results of Theorem~\ref{thm:ELS improvements D} already improve upon the literature/folklore knowledge in the simpler setting $D=\mathbb{R}^n$).

 Note that the first inequality in Theorem \ref{thm:ELS improvements} (convergence in
  function value) does not extend readily to all convex $D$, since its proof requires the inequality \eqref{eq:FmuL}.

\subsection{PEP with fixed step sizes}
We now state a result that is similar to Theorem \ref{thm:ELS improvements}, but deals with fixed step sizes instead of exact line search.
\begin{theorem}
\label{thm:FS improvements}
Consider the inexact gradient method with fixed step size $\gamma$ applied to some $f \in \mathcal{F}_{\mu,L}(\mathbb{R}^n)$.
If  $\varepsilon \in \left[0, \frac{2\mu}{L+\mu}\right]$, and $\gamma \in \left[0, \frac{2\mu-\varepsilon(L+\mu)}{(1-\varepsilon)\mu(L+\mu)}\right]$, one has
\begin{eqnarray*}
f(x_1)-f(x_*) &\le &  (1-(1-\varepsilon)\mu\gamma)^2(f(x_0)-f(x_*)), \\
\|g(x_1)\| &\le & (1-(1-\varepsilon)\mu\gamma) \norm{g(x_0)}, \\
\|{x_1-x_*}\| & \le & (1-(1-\varepsilon)\mu\gamma) \norm{x_0-x_*}. \\
\end{eqnarray*}
\end{theorem}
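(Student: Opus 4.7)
The plan is to follow the same template as the proof of Theorem \ref{thm:ELS improvements}, but using the fixed-step-size version of the PEP described at the end of Section 4.2. That is, I replace the exact-line-search constraint $\langle x_1-x_0, g_1\rangle = 0$ and the noise bound on $\langle g_0, g_1\rangle$ by the explicit update $x_1 = x_0 - \gamma d$ together with the noise inequality $\|d-g_0\|^2 \le \varepsilon^2 \|g_0\|^2$. In each of the three bounds I will aggregate a small collection of valid inequalities with nonnegative multipliers so that the resulting identity reads ``target $-$ claimed bound $\le -\,(\text{nonnegative quadratic})$''.

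Concretely, for the first inequality (convergence in function value) I would apply the interpolation inequality \eqref{eq:FmuL} to the pairs $(1,0)$, $(0,*)$ and $(*,1)$ (so that every variable $f_0,f_1,f_*$ appears, but the $f$'s cancel up to the desired combination $f_1 - (1-(1-\varepsilon)\mu\gamma)^2 f_0$), together with the noise constraint $\|d-g_0\|^2 \le \varepsilon^2 \|g_0\|^2$. For the second (resp.\ third) inequality I would apply \eqref{gradient inequality} to the pair $(0,1)$ (resp.\ to $(0,*)$ and $(1,*)$), again combined with the noise constraint. After substituting $x_1 = x_0 - \gamma d$ and collecting terms, the residual should factor as a sum of squares of linear combinations of $g_0$, $g_1$, $d$, and $x_0 - x_*$ with nonnegative coefficients, exactly as in the end of the proof of Theorem \ref{thm:ELS improvements}.

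As the footnote to Theorem \ref{thm:ELS improvements} already advertises, the actual obstacle is not checking the final algebraic identity but guessing the correct closed-form multipliers: I would solve the fixed-step-size PEP SDPs numerically for a grid of $(\mu,L,\gamma,\varepsilon)$, read off the optimal dual variables, and fit symbolic expressions to them. The admissible range $\varepsilon \le \tfrac{2\mu}{L+\mu}$ and $\gamma \in \bigl[0, \tfrac{2\mu-\varepsilon(L+\mu)}{(1-\varepsilon)\mu(L+\mu)}\bigr]$ should emerge precisely as the region where all such multipliers are nonnegative and the leftover quadratic is positive semidefinite; as a sanity check, setting $\varepsilon = 0$ and $\gamma = 2/(L+\mu)$ recovers the classical contraction factor $(L-\mu)/(L+\mu)$ for strongly convex gradient descent, and the range of $\gamma$ collapses to the familiar interval $[0, 2/(L+\mu)]$. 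Once the multipliers are in hand, the final step, as in Theorem \ref{thm:ELS improvements}, is a mechanical (though tedious) expansion that writes the inequality as ``claimed bound minus explicit squared-norm term'', from which dropping the nonnegative term yields the theorem.
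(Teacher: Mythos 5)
Your plan is essentially the paper's proof: the paper likewise proves each of the three bounds by aggregating a small collection of interpolation inequalities together with the noise constraint $\|d-g_0\|^2\le\varepsilon^2\|g_0\|^2$ using nonnegative multipliers, substitutes $x_1=x_0-\gamma d$, and writes the residual as a sum of squares whose coefficients are nonnegative precisely on the stated range of $\varepsilon$ and $\gamma$; the multipliers were found exactly as you describe, by solving the fixed-step PEP numerically and fitting symbolic expressions. One small correction to your guess about which constraints are active: for the gradient-norm bound the paper needs only the single inequality \eqref{gradient inequality} at the pair $(0,1)$, and for the distance-to-optimality bound only the pair $(0,*)$, so the dual variable on your proposed $(1,*)$ inequality comes out to zero. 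For the function-value bound your three-pair choice is the paper's; with $\rho_{\varepsilon}(\gamma)=1-(1-\varepsilon)\mu\gamma$ the multipliers turn out to be $\lambda_{01}=\rho_{\varepsilon}(\gamma)$, $\lambda_{*0}=\rho_{\varepsilon}(\gamma)(1-\rho_{\varepsilon}(\gamma))$, $\lambda_{*1}=1-\rho_{\varepsilon}(\gamma)$, and $\lambda_2=\frac{\gamma}{2\varepsilon}\rho_{\varepsilon}(\gamma)$ on the noise constraint, which make the $f$-terms telescope to $\rho_{\varepsilon}^2(\gamma)f_0-f_1+(1-\rho_{\varepsilon}^2(\gamma))f_*$ as you anticipate. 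The one thing your proposal defers is the closed-form multipliers and the explicit sum-of-squares certificates; you correctly flag this as the substantive remaining work, so as written the argument is a faithful proof outline rather than a complete proof, but it is the same outline the paper follows.
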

\begin{proof}
The proof is similar to that of Theorem \ref{thm:ELS improvements}, and is sketched in Appendix \ref{Appendix:A}.
\end{proof}

\medskip
 In Theorem~\ref{thm:FS improvements}, all results are provably tight. Indeed, one can easily verify that
 those bounds are achieved with equality (for all three cases: function, distance and gradient) on the one-dimensional minimization
  problem $\min_{x\in\mathbb{R}} f(x)$ with the quadratic function $f(x) = \dfrac{\mu}{2} x^2$ and the search
  direction $-g(x_0)(1-\varepsilon)= -\mu (1-\varepsilon) x_0$ (that satisfies the relative accuracy criterion~\eqref{eq:error}).
   Note that, in the case $\varepsilon=0$, one can therefore also recover all standard convergence guarantees that are tight
    on quadratics (see e.g.,~\cite{Taylor_Glineur_Hendrickx_2018} and the references therein).

\medskip
Note that, if $\gamma = \frac{2\mu-\varepsilon(L+\mu)}{(1-\varepsilon)\mu(L+\mu)}$, the factor $(1-(1-\varepsilon)\mu\gamma)$ that appears in the inequalities in Theorem \ref{thm:FS improvements} reduces to
\[
1-(1-\varepsilon)\mu\gamma = \frac{1-\kappa}{1+\kappa} + \varepsilon,
\]
where $\kappa = \mu/L$ as before.

Next, we again consider a variant constrained to an open, convex set $D \subset \mathbb{R}^n$.

\begin{theorem}
\label{Cor:F mu L error bounds D}
Assume $f \in \mathcal{F}_{\mu,L}(D)$ for some open convex set $D$, and $f$ twice continuously differentiable. Let $x_0 \in D$ so that $B(x_0,2\|x_0-x_*\|) \subset D$.
If $x_1 = x_0 - \gamma d$, with $\|d - g(x_0)\| \le \varepsilon\|g(x_0)\|$,  $\varepsilon \in \left[0, \frac{2\kappa}{1+\kappa}\right]$, and
\[
\gamma = \frac{2\mu-\varepsilon(L+\mu)}{(1-\varepsilon)\mu(L+\mu)},
\]
then
\begin{eqnarray*}
\|g_{x_0}(x_1)\| &\le & \left(\frac{1-\kappa}{1+\kappa}+\varepsilon\right)\|{g_{x_0}(x_0)}\|,\\
\|{x_1-x_*}\| & \le & \left(\frac{1-\kappa}{1+\kappa}+\varepsilon\right)\|{x_0-x_*}\|, \\
\end{eqnarray*}
where $\kappa= \mu/L$.
\end{theorem}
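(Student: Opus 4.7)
My plan is to derive this result from Theorem \ref{thm:FS improvements} by switching to the intrinsic inner product at $x_0$, in direct analogy with how Theorem \ref{thm:ELS improvements D} was obtained from Theorem \ref{thm:ELS improvements}. The key enabling fact is that the (appendix) proof of Theorem \ref{thm:FS improvements} only invokes \eqref{gradient inequality} at pairs drawn from $\{x_0,x_1,x_*\}$, and this inequality is valid on an arbitrary open convex domain by Theorem \ref{thm:mu_L_H}.

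First, I would adopt the intrinsic inner product $\langle\cdot,\cdot\rangle_{x_0}$ as the reference inner product, so that $\|\cdot\|$ becomes $\|\cdot\|_{x_0}$ and the reference gradient coincides with $g_{x_0}$. Under this choice the hypothesis $f\in\mathcal{F}_{\mu,L}(D)$ together with Theorem \ref{thm:mu_L_H} ensures that \eqref{gradient inequality} holds with the same constants $\mu,L$ on every pair of points of $D$.

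Next, I would verify that $x_0,x_1,x_*\in D$, so that the relevant instances of \eqref{gradient inequality} are legitimate. The point $x_0$ lies in $D$ by hypothesis, and $x_*\in B(x_0,2\|x_0-x_*\|_{x_0})\subseteq D$ trivially. For the new iterate, the triangle inequality combined with the inexactness condition $\|d-g_{x_0}(x_0)\|_{x_0}\le \varepsilon\|g_{x_0}(x_0)\|_{x_0}$ and the $L$-smoothness bound $\|g_{x_0}(x_0)\|_{x_0}\le L\|x_0-x_*\|_{x_0}$ yield
\[
\|x_1-x_0\|_{x_0}\;\le\;\gamma(1+\varepsilon)L\,\|x_0-x_*\|_{x_0},
\]
so the task reduces to verifying $\gamma(1+\varepsilon)L\le 2$ on the range $\varepsilon\in\left[0,\frac{2\kappa}{1+\kappa}\right]$. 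This algebraic check is what I expect to be the main technical obstacle: substituting the prescribed value of $\gamma$ reduces it to showing nonnegativity of a specific quadratic in $\varepsilon$, which I would confirm by evaluating at the two endpoints of the admissible interval and, when the vertex lies inside the interval (which happens only for sufficiently large $\kappa$), at that vertex as well. Once this is done, $x_1\in B(x_0,2\|x_0-x_*\|_{x_0})\subseteq D$.

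With $\{x_0,x_1,x_*\}\subset D$ secured, the aggregation of \eqref{gradient inequality} at the pairs $(x_0,x_*)$, $(x_1,x_*)$, $(x_0,x_1)$ together with the inexactness constraint $\|d-g_{x_0}(x_0)\|_{x_0}^2\le \varepsilon^2\|g_{x_0}(x_0)\|_{x_0}^2$ from the proof of Theorem \ref{thm:FS improvements} carries over unchanged, because every step relied only on \eqref{gradient inequality} and none on the global hypothesis $D=\mathbb{R}^n$. The resulting inequalities are precisely the two bounds in the statement, now interpreted in the norm $\|\cdot\|_{x_0}$.
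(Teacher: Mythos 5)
Your proof is correct but takes a different route than the paper's for the crucial step of showing $x_1\in D$. The paper first observes that the distance-to-optimality bound in the appendix proof of Theorem \ref{thm:FS improvements} uses \eqref{gradient inequality} only at the pair $(x_0,x_*)$, so that contraction is available before anything is known about $x_1$; it then applies the triangle inequality $\|x_1-x_0\|_{x_0}\le\|x_1-x_*\|_{x_0}+\|x_*-x_0\|_{x_0}$ together with that contraction factor, and the assumption $\varepsilon\le\frac{2\kappa}{1+\kappa}$ reduces the verification to the one-liner $\frac{1-\kappa}{1+\kappa}+\varepsilon\le 1$. You instead estimate $\|x_1-x_0\|_{x_0}=\gamma\|d\|_{x_0}\le\gamma(1+\varepsilon)L\|x_0-x_*\|_{x_0}$ directly via $L$-smoothness and then must verify $\gamma(1+\varepsilon)L\le 2$; substituting the prescribed $\gamma$ and writing $\kappa=\mu/L$, this amounts to $\varepsilon^2(1+\kappa)-\varepsilon(2\kappa^2+3\kappa-1)+2\kappa^2\ge 0$ on $\left[0,\frac{2\kappa}{1+\kappa}\right]$, which does hold (it is positive at both endpoints, and for $\kappa$ large enough that the vertex lies in the interval the discriminant $4\kappa^4+4\kappa^3-3\kappa^2-6\kappa+1$ is nonpositive, vanishing at $\kappa=1$). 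Both arguments are valid; the paper's bootstrap off the already-derived contraction is noticeably shorter and avoids the quadratic case analysis, while yours is more self-contained in that it does not re-use the theorem's conclusion to establish the domain-membership prerequisite needed for the gradient-norm bound.
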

\begin{proof}
Note that the result follows from the proof of Theorem \ref{thm:FS improvements}, provided that $x_1 \in D$.
In other words, we need to show that the condition $x_1 \in D$ is a consequence of the hypotheses. This follows from:
 \begin{eqnarray*}
  \|x_1-x_0\| &\le & \|x_1-x_*\|_{x_0} + \|x_*-x_0\| \quad\mbox{ (triangle inequality)} \\
                    & \le & \left(\frac{1-\kappa}{1+\kappa}+\varepsilon\right)\|{x_0-x_*}\| + \|x_*-x_0\|  \quad\mbox{ (by Theorem \ref{thm:FS improvements})}\\
                    &\le& 2\|x_*-x_0\|  \quad\mbox{ (by $\varepsilon \le \frac{2\kappa}{1+\kappa}$)}, \\
 \end{eqnarray*}
which implies $x_1 \in D$ due to the assumption $B(x_0,2\|x_0-x_*\|) \subset D$.
\end{proof}

 The same remarks as for Theorem~\ref{thm:FS improvements} apply here:
 the results are tight on quadratics, as the worst-case bounds match those in the case $D=\mathbb{R}^d$.

\section{Implications for Newton's method for self-concordant $f$}
\label{sec:Implications for Newton}
Theorem \ref{Cor:F mu L error bounds D} has interesting implications when minimizing a self-concordant function $f$ with minimizer $x_*$ by
Newton's method.
The implications become clear when fixing a point $x_0 \in D_f$, and using the inner product $\langle \cdot,\cdot \rangle_{x_0}$.
Then the gradient at $x_0$ becomes $g_{x_0}(x_0) = H_{x_0}^{-1}(x_0)g(x_0)$, which is the opposite of the Newton step at $x_0$.
We will consider approximate Newton directions in the sense of \eqref{eq:error}, i.e.\ directions $-d$ that satisfy
$\|d-g_{x_0}(x_0)\|_{x_0} \le \varepsilon \|g_{x_0}(x_0)\|_{x_0}$, where $\varepsilon >0$ is given.
We only state results for the fixed step-length case, for later use.
Similar error bounds can be obtained using Theorem \ref{thm:ELS improvements D} for inexact Newton methods with exact line search,
that are used, e.g.\, in long step interior point methods with inexact search directions; see, e.g.\ \cite[\S 2.5.3]{Renegar2001a}.

\begin{corollary}
\label{Cor:SC error bounds FS}
Assume $f$ is self-concordant with minimizer $x_*$. Let $0< \delta < 1$ be given and $x_0 \in D_f$ so that $\|x_0-x_*\|_{x_0} \le \frac{1}{2}\delta$.
If  $x_1 = x_0 -\gamma d$,  where $\|d-g_{x_0}(x_0)\|_{x_0} \le \varepsilon \|g_{x_0}(x_0)\|_{x_0}$ with $\varepsilon \in \left[0, \frac{2(1-\delta)^4}{1+ (1-\delta)^4}\right]$, and
\[
\gamma = \frac{2(1-\delta)^4 - \varepsilon(1+(1-\delta)^4)}{(1-\varepsilon)(1-\delta)^2((1-\delta)^4 + 1)},
\]
then
\begin{eqnarray*}
\|g_{x_0}(x_1)\|_{x_0} &\le & \left(\frac{1-\kappa_\delta}{1+\kappa_\delta}+\varepsilon\right)\|{g_{x_0}(x_0)}\|_{x_0},\\
\|{x_1-x_*}\|_{x_0} & \le & \left(\frac{1-\kappa_\delta}{1+\kappa_\delta}+\varepsilon\right)\|{x_0-x_*}\|_{x_0}, \\
\end{eqnarray*}
where $\kappa_\delta = (1-\delta)^4$.

\end{corollary}
\begin{proof}
By Corollary \ref{thm:sc_vs_FmuL}, if we fix the inner product $\langle \cdot,\cdot\rangle_{x_0}$, then $ f \in \mathcal{F}_{\mu,L}(B_{x_0}(x_0,\delta))$ with
\begin{equation}
\label{eq:mu L delta}
\mu = (1-\delta)^2, \; L = \frac{1}{(1-\delta)^2}.
\end{equation}
 As a consequence $\kappa_\delta := \kappa = \mu/L  = (1-\delta)^4$. (We  use the notation $\kappa = \kappa_\delta$ to emphasize that
 $\kappa$ depends on $\delta$ (only).)
 The required result now follows from Theorem \ref{Cor:F mu L error bounds D}.
 \end{proof}


In view of our earlier remarks on tightness of the bounds in Theorem \ref{Cor:F mu L error bounds D},
it is important to note that the bounds in Corollary \ref{Cor:SC error bounds FS} are not tight in general.
The reason is that we only used the fact that, for a given $x_0 \in D_f$ and $\delta \in (0,1)$, one has $ f \in \mathcal{F}_{\mu,L}(B_{x_0}(x_0,\delta))$ for the
values of $\mu$ and $L$ as given in \eqref{eq:mu L delta}. This is weaker than requiring self-concordance of $f$, as the following example shows.

\begin{example}
Consider the univariate
$f(x) = \frac{1}{12}x^4$ with $D_f = (0,\infty)$. At $x_0 = 1$, one has $H(x_0) = 1$.
If we set $\delta = \frac{1}{2}$, \eqref{eq:mu L delta} yields $\mu = \frac{1}{4}$ and $L = 4$, and we have $B_{x_0}(x_0,\delta) = \left(\frac{1}{2},\frac{3}{2}\right)$.
Since $H_{x_0}(y) = y^2$ for all $y \in \mathbb{R}$, one has $\mu < H_{x_0}(y) < L$ if $y \in B_{x_0}(x_0,\delta)$, and therefore $ f \in \mathcal{F}_{\mu,L}(B_{x_0}(x_0,\delta))$.
On the other hand, $f$ is not self-concordant on its domain, since it does not satisfy the condition $|f'''(x)| \le 2 f''(x)^{3/2}$ if $x \in (0,1)$.
\end{example}

A final, but important observation is that the results in Corollary \ref{Cor:SC error bounds FS} remain
valid if we use the $\langle\cdot,\cdot\rangle_{x_*}$ inner product, as opposed to $\langle\cdot,\cdot\rangle_{x_0}$.
This implies that we (approximately) use the direction $-g_{x_*}(x_0) = -H^{-1}(x_*)g(x_0)$.
Such a direction may seem to be of no practical use, since $x_*$ is not known, but in the
next section we will analyze an interior point method that uses precisely such search directions.

For easy reference, we therefore state the worst-case convergence result when using  the $\langle\cdot,\cdot\rangle_{x_*}$ inner product.

\begin{corollary}
\label{Cor:SC error bounds FS2}
Assume $f$ is self-concordant with minimizer $x_*$. Let $ \delta \in(0, 1)$ be given and $x_0 \in D_f$ so that $\|x_0-x_*\|_{x_*} \le \frac{1}{2} \delta$.
If $x_1 = x_0 -\gamma d$, where $\|d-g_{x_*}(x_0)\|_{x_*} \le \varepsilon \|g_{x_*}(x_0)\|_{x_*}$ with
$\varepsilon \in \left[0,\frac{2(1-\delta)^4}{1+ (1-\delta)^4}\right]$, and step size
\begin{equation}
\label{eq:choice of gamma}
\gamma = \frac{2(1-\delta)^4 - \varepsilon(1+(1-\delta)^4)}{(1-\varepsilon)(1-\delta)^2((1-\delta)^4 + 1)},
\end{equation}
then
\begin{eqnarray*}
\|g_{x_0}(x_1)\|_{x_*} &\le & \left(\frac{1-\kappa_\delta}{1+\kappa_\delta}+\varepsilon\right)\|{g_{x_0}(x_0)}\|_{x_*},\\
\|{x_1-x_*}\|_{x_*} & \le & \left(\frac{1-\kappa_\delta}{1+\kappa_\delta}+\varepsilon\right)\|{x_0-x_*}\|_{x_*}, \\
\end{eqnarray*}
where $\kappa_\delta = (1-\delta)^4$.

\end{corollary}

We may compare Corollaries
\ref{Cor:SC error bounds FS} and \ref{Cor:SC error bounds FS2} to the following results that may be obtained from standard interior point analysis.

\begin{theorem}[Based on Theorems 1.6.2 and 2.2.3 in \cite{Renegar2001a}]
	\label{lemma:ImproveDistanceZ}
	Let $f$ be a self-concordant function with minimizer $x_*$, and let $x_0 \in B_{x_0}(x_*,1)$. Define $x_1 = x_0 - \gamma[ H(x_0)^{-1}g(x_0) + e(x_0)]$ for some $\gamma \in (0,1)$, where $e(x_0)$ denotes an error in the Newton direction at the point $x_0$.
	If $\| e(x_0) \|_{x_0} \leq \varepsilon \|H(x_0)^{-1}g(x_0)\|_{x_0}$, then
	\begin{equation}
\label{Newton SC bound}
		\| x_1 - x_* \|_{x_0} \leq \frac{(1 - \gamma + \gamma^2 \varepsilon)\|x_0-x_*\|_{x_0} + \gamma \|x_0-x_*\|_{x_0}^2}{\gamma(1 - \|x_0-x_*\|_{x_0})}.
	\end{equation}
Similarly, if we define instead $x_1 = x_0 - \gamma[ H(x_*)^{-1}g(x_0) + e(x_0)]$, i.e.\ replace $H(x_0)$ by $H(x_*)$ in the definition of $x_1$, then
	\begin{equation}
\label{Newton-type SC bound}
		\| x_1 - x_* \|_{x_*} \leq \frac{(1 - \gamma + \gamma^2 \varepsilon)\|x_0-x_*\|_{x_*} + \gamma \|x_0-x_*\|_{x_*}^2}{\gamma(1 - \|x_0-x_*\|_{x_*})},
	\end{equation}
under the assumption $x_0 \in B_{x_*}(x_*,1)$.
\end{theorem}
Note that the only difference between the inequalities  \eqref{Newton SC bound} and \eqref{Newton-type SC bound} is the choice of local norm.

To compare Theorem \ref{lemma:ImproveDistanceZ} to Corollaries  \ref{Cor:SC error bounds FS} and \ref{Cor:SC error bounds FS2}, we present a plot of the
respective upper bounds in Figure \ref{fig:Comparison} for for different values of $\varepsilon$. The value of the step size $\gamma$ is as in
\eqref{eq:choice of gamma} with $\delta = 2\|x_0-x_*\|_{x_*}$.

\begin{figure}[h!]
	\begin{tikzpicture}
		\begin{axis}[plotOptions, title={$\varepsilon = 0$}, small,	domain=0:0.25,	ylabel near ticks]
			\addplot+[dashed, mark=none]{-(x^2*(8*x^3 - 12*x^2 + 4*x + 1))/((2*x - 1)^2*(x - 1))};
			\addplot+[mark=none]{x*(1-(1-2*x)^4)/(1+(1-2*x)^4)};
			\legend{Thm. \ref{lemma:ImproveDistanceZ}, Cor. \ref{Cor:SC error bounds FS2}}
		\end{axis}
	\end{tikzpicture}\hfill
	\begin{tikzpicture}
		\begin{axis}[plotOptions, title={$\varepsilon = \frac{1}{2} \frac{(1- 2\| x_0-x_* \|_{x_*})^4}{1 + (1-2 \| x_0-x_*\|_{x_*})^4}$}, small,	domain=0:0.25,	ylabel near ticks]
			\addplot+[dashed, mark=none]{(x*((2*x - 1)^4/(2*((2*x - 1)^4 + 1)) - 1)*((2*x - 1)^4 + 1)*(8192*x^12 - 43008*x^11 + 98304*x^10 - 125440*x^9 + 97024*x^8 - 48256*x^7 + 18688*x^6 - 8416*x^5 + 4256*x^4 - 1696*x^3 + 448*x^2 - 60*x + 9))/(6*(2*x - 1)^2*(x - 1)*(8*x^4 - 16*x^3 + 12*x^2 - 4*x + 1)*(16*x^4 - 32*x^3 + 24*x^2 - 8*x + 3)^2)};
			\addplot+[mark=none]{x*(1-(1-2*x)^4/2)/(1+(1-2*x)^4)};
			\legend{Thm. \ref{lemma:ImproveDistanceZ}, Cor. \ref{Cor:SC error bounds FS2}}
		\end{axis}
	\end{tikzpicture}
	
	\begin{tikzpicture}
		\begin{axis}[plotOptions, title={$\varepsilon =  \frac{(1- 2\| x_0-x_* \|_{x_*})^4}{1 + (1-2 \| x_0-x_*\|_{x_*})^4}$}, small,	domain=0:0.25,	ylabel near ticks]
			\addplot+[dashed, mark=none]{-(x*(256*x^8 - 960*x^7 + 1536*x^6 - 1360*x^5 + 736*x^4 - 256*x^3 + 56*x^2 - 6*x + 1))/(2*(2*x - 1)^2*(8*x^5 - 24*x^4 + 28*x^3 - 16*x^2 + 5*x - 1))};
			\addplot+[mark=none]{x*(1/(1+(1-2*x)^4) )};
			\legend{Thm. \ref{lemma:ImproveDistanceZ}, Cor. \ref{Cor:SC error bounds FS2}}
		\end{axis}
	\end{tikzpicture}\hfill
	\begin{tikzpicture}
		\begin{axis}[plotOptions, title={$\varepsilon = \frac{3}{2} \frac{(1- 2\| x_0-x_* \|_{x_*})^4}{1 + (1-2 \| x_0-x_*\|_{x_*})^4}$}, small,	domain=0:0.25,	ylabel near ticks]
			\addplot+[dashed, mark=none]{(x*((3*(2*x - 1)^4)/(2*((2*x - 1)^4 + 1)) - 1)*((2*x - 1)^4 + 1)*(8192*x^12 - 51200*x^11 + 147456*x^10 - 258560*x^9 + 306432*x^8 - 257152*x^7 + 155392*x^6 - 67296*x^5 + 20320*x^4 - 4000*x^3 + 416*x^2 + 4*x + 3))/(2*(2*x - 1)^2*(x - 1)*(8*x^4 - 16*x^3 + 12*x^2 - 4*x + 1)*(- 16*x^4 + 32*x^3 - 24*x^2 + 8*x + 1)^2)};
			\addplot+[mark=none]{x*(1+(1-2*x)^4/2)/(1+(1-2*x)^4)};
			\legend{Thm. \ref{lemma:ImproveDistanceZ}, Cor. \ref{Cor:SC error bounds FS2}}
		\end{axis}
	\end{tikzpicture}
	\caption{Upper bounds on $\| x_1 - x_* \|_{x_*}$ from Theorem \ref{lemma:ImproveDistanceZ} and Corollary \ref{Cor:SC error bounds FS2}.}
	\label{fig:Comparison}
\end{figure}

A few remarks on Figure \ref{fig:Comparison}:
\begin{enumerate}
\item
Although the figure only compares inequality \eqref{Newton-type SC bound} in Theorem \ref{lemma:ImproveDistanceZ} to the bound
in Corollary  \ref{Cor:SC error bounds FS2}, the exact same plots remain valid when comparing
the Newton direction bounds, namely inequality \eqref{Newton SC bound} in Theorem \ref{lemma:ImproveDistanceZ} to the bound
in Corollary  \ref{Cor:SC error bounds FS}. The only difference is the scaling on the axes, since one should then switch from
the $\|\cdot\|_{x_*}$ norm to the $\|\cdot\|_{x_0}$ norm. In this case the value $\gamma$ is still given by \eqref{eq:choice of gamma}, but with $\delta = 2\|x_0-x_*\|_{x_0}$.
\item
It is clear that our new bounds in Corollary  \ref{Cor:SC error bounds FS2} (and Corollary  \ref{Cor:SC error bounds FS}) improve on the known bounds
in most cases. Even when $\varepsilon = 0$, we still improve if  $\| x_0 - x_* \|_{x_*}$ is sufficiently large. As $\varepsilon$ grows, our bounds clearly
improve on those in Theorem \ref{lemma:ImproveDistanceZ}.
\item
In the figure, our new error bound remains bounded as the initial distance $\| x_0 - x_* \|_{x_*}$ approaches $1$, but this is not the case for
the bound from Theorem \ref{lemma:ImproveDistanceZ}. Thus our new results capture a desirable feature of the convergence near the boundary of the Dikin ellipsoid.
\end{enumerate}

\section{Complexity of a short step interior point method using inexact search directions}
\label{sec:inexactIPM}
We now sketch a proof of how to bound the worst-case iteration complexity of a short step interior point method using
inexact search directions.

Given a convex body $\mathcal{K}\subset \mathbb{R}^n$ and a vector $\hat\theta \in \mathbb{R}^n$, we consider the convex optimization problem
\begin{equation}
\label{prob:convex}
\min_{x \in \mathcal{K}} \hat \theta^\tsp x.
\end{equation}

A subclass of self-concordant functions, that play a key role in interior point analysis, are the so-called self-concordant barriers.
\begin{definition}[Self-concordant barrier]
A self-concordant function  $f$ is called a $\vartheta$-self-concordant barrier
if there is a finite value $\vartheta \ge 1$ given by
\[
  \vartheta := \sup_{x\in D_f} \|g_x(x)\|_x^2.
\]
\end{definition}
We will assume that we know a self-concordant barrier function with domain given by the interior of $\mathcal{K}$, say $f_{\mathcal{K}}$.

The key observation is that one may analyse the complexity of interior point methods by only analysing
the progress during one iteration; see e.g.\ \cite[\S2.4]{Renegar2001a}. Thus our analysis of the previous section may be applied readily.
At each interior point iteration, one approximately minimizes a self-concordant function of the form
\begin{equation}
\label{def:f}
f(x) = \eta\hat\theta^\top x + f_{\mathcal{K}}(x),
\end{equation}
where $\eta > 0$ is a fixed parameter. We denote its unique minimizer
 by $x(\eta)$, and call it the point on the \emph{central path} corresponding to $\eta$.
Subsequently, in the next interior point iteration, the value of $\eta$ is increased, and the process is repeated.

We may now state two variants ($A$ and $B$) of a short step, interior point method using inexact search directions (see Algorithm \ref{alg}).
Variant $A$ corresponds to the short step interior point  method analysed by Renegar \cite[\S2.4.2]{Renegar2001a}, but
allows for inexact Newton directions. Variant $B$ captures the framework of the interior point method of
Abernethy and Hazan \cite[Appendix D, supplementary material]{Abernethy_Hazan_2016}, to be discussed in Section \ref{sec:Analysis of the method of Abernathy-Hazan}.

\begin{algorithm}
\caption{Short step interior point method using
inexact directions (variants $A$ and $B$) \label{alg}}
\begin{algorithmic}
 \STATE{Tolerances: $\varepsilon > 0$ (for search direction error), $\bar \epsilon > 0$ (for stopping criterion)}
 \STATE{Proximity to central path parameter: $\delta \in (0,1)$}
 \STATE{Barrier parameter for $f_{\mathcal{K}}$: $\vartheta \ge 1$}
 \STATE{ Objective vector $\hat \theta \in \mathbb{R}^n$}
 \STATE{Given an $x_0\in \mathcal{K}$ and $\eta_0 >0$ such that $\|x_0- x(\eta_0)\|_{x(\eta_0)} \le \frac{1}{2}\delta$ (variant $A$) or
 $\|x_0- x(\eta_0)\|_{x_0} \le \frac{1}{2}\delta$ (variant $B$).}
\STATE{Set the step size $\gamma = \frac{2(1-\delta)^4 - \varepsilon(1+(1-\delta)^4)}{(1-\varepsilon)(1-\delta)^2((1-\delta)^4 + 1)}$}
    \STATE{Iteration: $k=0$}
 \WHILE{$\frac{\vartheta}{\eta_k} > \frac{5}{6}\bar\epsilon$}
  \STATE{ compute $ d$ that satisfies $\|d-g_{x_k}(x_k)\|_{x_k} \le \varepsilon \|g_{x_k}(x_k)\|_{x_k}$ (variant $A$)
   or $\|d-g_{x(\eta)}(x_k)\|_{x(\eta)} \le \varepsilon \|g_{x(\eta)}(x_k)\|_{x(\eta)}$ (variant $B$)}
  \STATE{$x_{k+1} = x_k - \gamma  d$}
  \STATE{$\eta_{k+1} = \left(1+ \frac{1}{32\sqrt{\vartheta}}\right)\eta_k$}
  \STATE{$k \leftarrow k+1$}
  \ENDWHILE
\RETURN {$x_k$ an $\bar \epsilon$-optimal solution to $\min_{x\in \mathcal{K}} \hat \theta^\top x$}
\end{algorithmic}
\end{algorithm}

We will show the following worst-case iteration complexity result.

\begin{theorem}
\label{thm:complexity bound IPM iterations}
Consider Algorithm \ref{alg} with the following input parameter settings:
 $\bar \epsilon > 0$,   $0 \le \varepsilon  \le \frac{1}{6}$, and $\delta = \frac{1}{4}$.
Both variants of the algorithm then  terminate
after at most
\[
N =\left\lceil 40\sqrt{\vartheta}\ln\left(\frac{6\vartheta}{5\eta_0 \bar\epsilon}\right) \right\rceil
\]
iterations. The result is an $x_N \in \mathcal{K}$ such that
\[
 \hat \theta^\top x_N  - \min_{x\in \mathcal{K}} \hat \theta^\top x \le \bar \epsilon.
\]
\end{theorem}
\begin{proof}
The proof follows the usual lines of analysis of short step interior point methods; in particular we will repeatedly refer to
Renegar  \cite[\S2.4]{Renegar2001a}.
We only analyse variant $B$ of Algorithm \ref{thm:complexity bound IPM iterations}, as the analysis of
variant $A$ is similar, but simpler.

We only need to show that, at the start of each iteration $k$, one has
\[
\|x_k- x(\eta_k)\|_{x(\eta_k)} \le \frac{1}{2}\delta.
\]
Since on the central path one has $ \hat \theta^\top x(\eta)  - \min_{x\in \mathcal{K}} \hat \theta^\top x  \le \vartheta/\eta$, the required result will
then follow in the usual way (following the proof of relation (2.18) in  \cite[p.\ 47]{Renegar2001a}).

Without loss of generality we therefore only consider the first iteration, with a given
$x_0\in \mathcal{K}$ and $\eta_0 >0$ such that
$\|x_0- x(\eta_0)\|_{x(\eta_0)} \le \frac{1}{2}\delta$, and proceed to show that
$\|x_1- x(\eta_1)\|_{x(\eta_1)} \le \frac{1}{2}\delta$.

First, we bound the difference between the successive `target' points on the central path, namely $x(\eta_0)$ and $x(\eta_1)$, where
$\eta_1 = \left(1+ \frac{\alpha}{\sqrt{\vartheta}}\right)\eta_0$ with $\alpha = 1/32$. By the same argument as in \cite[p.\ 46]{Renegar2001a}, one obtains:
\begin{eqnarray*}
\|x(\eta_1) - x(\eta_0)\|_{x(\eta_0)} &\le& \alpha + \frac{3\alpha^2}{(1-\alpha)^3}\\
                                      &\le& 0.0345 \mbox{ for $\alpha = 1/32$}.
\end{eqnarray*}
Moreover, by Corollary \ref{Cor:SC error bounds FS2},
\begin{eqnarray*}
\|{x_1-x(\eta_0)}\|_{x(\eta_0)}  &\le&  \left(\frac{1-(1-\delta)^4}{1+(1-\delta)^4}+\varepsilon\right)\|{x_0-x(\eta_0})\|_{x(\eta_0)} \\
                                  &\le & 0.6860\cdot \frac{1}{2} \delta \le  0.0857.
\end{eqnarray*}
Using the triangle inequality,
\begin{eqnarray*}
\|{x_1-x(\eta_1)}\|_{x(\eta_0)}  &\le& \|{x_1-x(\eta_0)}\|_{x(\eta_0)} + \|{x(\eta_1)-x(\eta_0)}\|_{x(\eta_0)} \\
& \le & 0.0857 + 0.0345 = 0.1202.
\end{eqnarray*}
Finally, by the definition of self-concordance, one has
\[
\|{x_1-x(\eta_1)}\|_{x(\eta_1)} \le \frac{\|{x_1-x(\eta_1)}\|_{x(\eta_0)}}{1-\|{x(\eta_0)-x(\eta_1)}\|_{x(\eta_0)}} \le \frac{0.1202}{1-0.0345} \le 0.1245 < \frac{1}{2}\delta,
\]
as required.
\end{proof}

\medskip
It is insightful to note that, in the proof of Theorem \ref{thm:complexity bound IPM iterations}, it would not suffice to use the classical bound from Theorem \ref{lemma:ImproveDistanceZ}.
Indeed, we used $\|{x_1-x(\eta_0)}\|_{x(\eta_0)} \le 0.0857$ in the proof, obtained from our new bound in Corollary \ref{Cor:SC error bounds FS2}.
If we had used Theorem \ref{lemma:ImproveDistanceZ} instead, we would only obtain $\|{x_1-x(\eta_0)}\|_{x(\eta_0)} \le 0.1042$ (by using $\gamma = 0.67$), which would be too weak to complete the argument.
Of course, one could prove a variation on Theorem \ref{thm:complexity bound IPM iterations} by using Theorem \ref{lemma:ImproveDistanceZ}
and smaller values of $\delta$ and $\varepsilon$. Having said that, it is clear that our analysis adds in a meaningful way to the classical
 interior point analysis,
removing the need to use weaker parameter values.

\subsection{Analysis of the method of Abernathy-Hazan}
\label{sec:Analysis of the method of Abernathy-Hazan}

Abernathy and Hazan \cite{Abernethy_Hazan_2016} describe an interior point method to solve the convex optimization problem
\eqref{prob:convex}
if one only has access to a membership oracle for $\mathcal{K}$ (see  Abernethy and Hazan \cite[Appendix D, supplementary material]{Abernethy_Hazan_2016}).
As mentioned earlier, it falls within the framework of variant B of Algorithm \ref{alg} above.

 This method has generated  recent interest, since it is closely related to
a simulated annealing algorithm, and may be implemented by only sampling from $\mathcal{K}$.
Polynomial-time complexity of certain simulated annealing methods for
convex optimization was first shown by Kalai and Vempala \cite{Kalai-Vempala 2006}, and the link with interior point methods
casts light on their result.

The interior point method in question
used the so-called entropic (self-concordant) barrier function, introduced by Bubeck and Eldan \cite{Bubeck_Eldan_2014}, and we first review the necessary background.

\subsubsection{Background on the entropic barrier method}
The following discussion  is condensed from  \cite{Abernethy_Hazan_2016}.

The method is best described by considering the Boltzman probability distribution on $\mathcal{K}$:
\[
 P_\theta(x) := \exp(- \theta^\top x - A(\theta)) \quad \text{ where } \quad A(\theta) := \ln \int_K \exp(-\theta^\top x')dx',
\]
where $\theta = \eta \hat \theta$ for some fixed parameter $\eta > 0$. We write $X \sim P_\theta$ if the random variable $X$ takes values in $\mathcal{K}$
according to the Boltzman probability distribution on $\mathcal{K}$ with density $P_\theta$.

The convex function $A(\cdot)$ is known as the \emph{log partition function}, and has derivatives:
\begin{eqnarray*}
	\nabla A(\theta) & = & - \mathbb{E}_{X \sim P_\theta}[X]  \\
	\nabla^2 A(\theta) & = & \mathbb{E}_{X \sim P_\theta}[(X - \mathbb{E}_{X \sim P_\theta}[X])(X - \mathbb{E}_{X \sim P_\theta}[X])^\top].
\end{eqnarray*}

The Fenchel conjugate of $A(\theta)$ is
\[
 A^*(x) := \sup_{\theta \in \mathbb{R}^n} \theta^\top x - A(\theta).
\]
  The domain of $A^*(\cdot)$ is precisely the space of gradients of $A(\cdot)$,  and this is the set $\text{int}(-\mathcal{K})$.

The following key result shows that $A^*$ provides a self-concordant barrier for the set $\mathcal{K}$.
 \begin{theorem}[Bubeck-Eldan \cite{Bubeck_Eldan_2014}]
	The function $x \mapsto A^*(-x)$ is a $\vartheta$-self-concordant barrier function on $\mathcal{K}$ with $\vartheta \leq n(1 + o(1))$.
\end{theorem}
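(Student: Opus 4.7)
The plan is to argue via Fenchel duality, splitting the proof into (i) verifying self-concordance of $A^*$ and (ii) bounding the barrier parameter. First I would set up the duality carefully. Since $\mathcal{K}$ is a convex body, $P_\theta$ is a nondegenerate log-concave density on $\mathcal{K}$, so $\nabla^2 A(\theta) = \mathrm{Cov}_{P_\theta}(X) \succ 0$ and $A$ is strictly convex. Hence the gradient map $\theta \mapsto \nabla A(\theta) = -\mathbb{E}_{P_\theta}[X]$ is a diffeomorphism from $\mathbb{R}^n$ onto $-\mathrm{int}(\mathcal{K})$. Writing $\theta(x)$ for its inverse, standard Legendre formulas yield $\nabla_x A^*(-x) = -\theta(x)$ and $\nabla_x^2 A^*(-x) = \Sigma(\theta(x))^{-1}$, where $\Sigma(\theta) := \mathrm{Cov}_{P_\theta}(X)$. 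Thus all derivatives of $F(x) := A^*(-x)$ are expressible in terms of moments of the tilted measure $P_{\theta(x)}$.

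Next I would establish self-concordance. Because $A$ is a log-Laplace transform, its $k$-th derivative along a direction $h$ is (up to sign) the $k$-th cumulant of the linear functional $h^\top X$ under $P_\theta$. Using the classical Berwald-type inequality for log-concave random variables, which bounds the absolute third central moment by the $3/2$ power of the variance up to a universal constant of $2$, one obtains exactly $|D^3 A(\theta)[h,h,h]| \leq 2 (D^2 A(\theta)[h,h])^{3/2}$, i.e., $A$ is self-concordant in Nesterov-Nemirovski's sense. Self-concordance is preserved under affine reparametrization and under Fenchel conjugation (a standard result via the Hessian inversion identity above), so $F(x) = A^*(-x)$ is self-concordant on $\mathrm{int}(\mathcal{K})$. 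The barrier property, namely $F(x_k) \to \infty$ whenever $x_k \to \partial \mathcal{K}$, follows from the fact that approaching the boundary forces $\|\theta(x)\| \to \infty$ and hence $A^*(-x) \to \infty$ by construction of the conjugate.

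Finally, the barrier parameter. Unpacking the definition, one has
\[
\|g_x(x)\|_x^2 \;=\; \nabla F(x)^\top [\nabla^2 F(x)]^{-1} \nabla F(x) \;=\; \theta(x)^\top \Sigma(\theta(x))\, \theta(x) \;=\; \mathrm{Var}_{P_{\theta(x)}}(\theta(x)^\top X),
\]
so the goal is to prove $\sup_\theta \mathrm{Var}_{P_\theta}(\theta^\top X) \leq n(1+o(1))$. I would rewrite this variance using the Gibbs-variational identity $A^*(-\mathbb{E}_{P_\theta}X) = -H(P_\theta)$, where $H$ denotes differential entropy, so that $\theta^\top \mathbb{E}_{P_\theta}[X]$ is expressible as an entropy plus $A(\theta)$. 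The crude universal bound $\vartheta \leq n$ already follows from a general fact about exponential families together with Brascamp-Lieb-type variance inequalities for log-concave densities. The sharper $n(1+o(1))$ refinement is the main obstacle: it requires Bubeck and Eldan's entropy comparison, which controls the thin-shell-like quantity $\mathrm{Var}(\theta^\top X)$ for tilted log-concave measures by comparing the entropy of $P_\theta$ to that of an isotropic Gaussian of matched covariance, absorbing the dimensional correction into the $o(1)$ term.

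I expect step three to be the real difficulty. The Fenchel-duality setup and self-concordance are essentially symbolic once one knows the Berwald inequality, whereas the tight $n(1+o(1))$ parameter is a genuinely deep statement about log-concave measures and is the substantive content of the Bubeck-Eldan theorem; the most convincing route is to cite their entropy-variance inequality and plug it in.
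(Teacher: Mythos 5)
This theorem is cited by the paper to Bubeck and Eldan and used as a black box; the paper gives no proof of it, so there is no in-paper argument to compare against. Your sketch does correctly outline the skeleton of the Bubeck--Eldan argument: the Legendre-duality identities $\nabla F(x) = -\theta(x)$ and $\nabla^2 F(x) = \Sigma(\theta(x))^{-1}$, the third-cumulant bound $|D^3 A(\theta)[h,h,h]| \leq 2\,(D^2 A(\theta)[h,h])^{3/2}$ for log-concave one-dimensional marginals (yielding self-concordance of $A$, hence of $A^*$ and of $F=A^*_{-}$ via conjugation and affine reparametrization), and the reduction $\|g_x(x)\|_x^2 = \mathrm{Var}_{P_{\theta(x)}}(\theta(x)^\top X)$.

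Two cautions, though. First, the assertion that a ``crude'' bound $\vartheta \le n$ already follows from exponential-family generalities plus a Brascamp--Lieb variance inequality is unsupported and, as far as I can tell, not correct: the Brascamp--Lieb inequality degenerates for the tilted uniform density on $\mathcal{K}$, whose potential is linear and hence has zero Hessian, so it yields no useful control on $\mathrm{Var}_{P_\theta}(\theta^\top X)$. Obtaining any dimension-sharp bound on $\sup_\theta \mathrm{Var}_{P_\theta}(\theta^\top X)$ is precisely the hard content of Bubeck--Eldan, and even there one only gets $n(1+o(1))$, not $n$ exactly. Second, your treatment of the barrier parameter ultimately invokes Bubeck and Eldan's own entropy/variance estimate rather than proving it, so what you have written is a reduction of the theorem to its key lemma rather than an independent proof. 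That is acceptable in context --- the paper itself treats the statement as an external citation --- but it should be presented as a citation of the key inequality, not as a self-contained derivation.
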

The function $x \mapsto A^*(-x)$ is denoted by $A^*_{-}(\cdot)$ and called the {entropic barrier} for $\mathcal{K}$.

At every step of the associated interior point method, one wishes to minimize (approximately) a self-concordant function of the
form \eqref{def:f}, where we now have the barrier function $f_{K}(x) = A^*_{-}(x)$.

In keeping with our earlier notation for performance estimation,
 we denote the minimizer of $f$ on $\mathcal{K}$ by $x_*$ (as opposed to $x(\eta)$). Thus $x_*$ is the point on the central path corresponding to
the parameter $\eta$.
We also assume a current iterate $x_0 \in \mbox{int}(\mathcal{K})$ is available
so that $\|x_* -x_0\|_{x_*} = \frac{1}{2}\delta < \frac{1}{2}$.

Abernathy and Hazan \cite[Appendix D, supplementary material]{Abernethy_Hazan_2016} propose to use the following direction to minimize $f$:
\begin{equation}
\label{eq:search direction Hazan}
-d = -\nabla^2 f (x_*)^{-1}\nabla f(x_0).
\end{equation}
The underlying idea is that $\nabla^2 f (x_*)^{-1}$ may be approximated  to any given accuracy through sampling, based on the following result.

\begin{lemma}[\cite{Bubeck_Eldan_2014}]
\label{thm:Hessian sampling}
One has
\[
\nabla^2 f (x_*)^{-1} = \nabla^2 A(\theta) = \mathbb{E}_{X \sim P_\theta}[(X - \mathbb{E}_{X \sim P_\theta}[X])(X - \mathbb{E}_{X \sim P_\theta}[X])^\top],
\]
where $\theta = \eta \hat \theta$.
\end{lemma}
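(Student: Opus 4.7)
The statement contains two equalities. The second one, expressing $\nabla^2 A(\theta)$ as a covariance matrix of $X \sim P_\theta$, is already displayed in the excerpt (it is the standard exponential-family calculation obtained by differentiating $A(\theta) = \ln \int_{\mathcal K} e^{-\theta^\top x'}dx'$ twice and recognising the resulting moments). So the real content is the first equality $\nabla^2 f(x_*)^{-1} = \nabla^2 A(\theta)$, and my plan is to derive it from Fenchel duality applied to the pair $A$, $A^*$.

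First I would unpack the Hessian of $f$. Since the linear term $\eta\hat\theta^\top x$ in \eqref{def:f} contributes nothing to the second derivative, we have $\nabla^2 f(x) = \nabla^2 A^*_-(x)$. Writing $A^*_-(x) = A^*(-x)$ and applying the chain rule twice (the two sign flips cancel), this equals $\nabla^2 A^*(-x)$. Hence the target identity reduces to
\[
\nabla^2 A^*(-x_*) = \bigl(\nabla^2 A(\theta)\bigr)^{-1},
\]
and it suffices to locate $-x_*$ in the domain of $A^*$ and invoke the classical conjugate-Hessian relation $\nabla^2 A^*(y) = \bigl(\nabla^2 A(\nabla A^*(y))\bigr)^{-1}$.

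The next step is to identify $-x_*$ as the image of $\theta$ under $\nabla A$. Since $x_*$ minimises $f$ on $\mathrm{int}(\mathcal{K})$, we have $\nabla f(x_*)=0$, i.e.\ $\eta\hat\theta + \nabla A^*_-(x_*) = 0$. Using once more $\nabla A^*_-(x) = -\nabla A^*(-x)$, this yields $\nabla A^*(-x_*) = \eta\hat\theta = \theta$. Because $A$ is a smooth, strictly convex function of Legendre type (its Hessian is a positive definite covariance matrix), the gradient maps $\nabla A$ and $\nabla A^*$ are mutually inverse diffeomorphisms between $\mathrm{int}(\mathrm{dom}\, A)$ and $\mathrm{int}(\mathrm{dom}\, A^*) = \mathrm{int}(-\mathcal K)$; thus $\nabla A^*(-x_*) = \theta$ is equivalent to $-x_* = \nabla A(\theta)$. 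Substituting this into the conjugate-Hessian relation gives exactly $\nabla^2 A^*(-x_*) = (\nabla^2 A(\theta))^{-1}$, which inverts to the desired first equality.

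The only subtle point, and the main obstacle to watch for, is the bookkeeping of signs between $A^*$ and $A^*_-$, and justifying the inverse-Hessian formula for Fenchel conjugates. The latter is standard once $A$ is known to be of Legendre type, and one can derive it on the fly by differentiating the identity $\nabla A \circ \nabla A^* = \mathrm{id}$ (valid on $\mathrm{int}(\mathrm{dom}\,A^*)$) to obtain $\nabla^2 A(\nabla A^*(y))\,\nabla^2 A^*(y) = I$. Everything else is a direct application of already-stated formulas for $\nabla A$ and $\nabla^2 A$ from the text, and combining the two equalities then produces the stated sampling representation of $\nabla^2 f(x_*)^{-1}$.
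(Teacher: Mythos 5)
Your proof is correct and fills in exactly the argument the paper compresses into one sentence (``follows immediately from the relationship between the Hessians of a convex function and its conjugate, as given in Crouzeix''): you reduce $\nabla^2 f(x_*)$ to $\nabla^2 A^*(-x_*)$, identify $-x_*=\nabla A(\theta)$ via first-order optimality and the inverse-gradient duality, and invoke the conjugate-Hessian relation $\nabla^2 A^*(y)=\bigl(\nabla^2 A(\nabla A^*(y))\bigr)^{-1}$. Same approach, just carried out explicitly.
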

The proof follows immediately from the relationship between the Hessians of a convex function and its conjugate, as  given in \cite{Crouzeix_1977}.

Thus we may approximate $\nabla^2 f (x_*)^{-1}$ by an empirical covariance matrix as follows.
If $X_i  \sim P_\theta$ $(i = 1,\ldots,N)$ are i.i.d., then we define the
associated estimator of the covariance matrix of the $X_i$'s as
\begin{equation}
\label{empirical covariance}
\hat\Sigma := \frac{1}{N}\sum_{i=1}^N (X_i - \bar X)(X_i - \bar X)^\top \quad \mbox{ where } \bar X = \frac{1}{N} \sum_{i=1}^N X_i.
\end{equation}
The estimator $\hat \Sigma$ is known as the empirical covariance matrix, and it may be observed by sampling $X  \sim P_\theta$.
This may be done efficiently:
for example, Lov\'asz and Vempala \cite{Lovasz_Vempala_2007} showed that one may sample (approximately) from log-concave distributions on compact bodies in polynomial time,
by using the Markov-chain Monte-Carlo sampling method called hit-and-run, introduced by Smith \cite{Smith 1984}.

The following concentration result (i.e.\ error bound) is known for the
empirical covariance matrix. We state this result to motivate our framework of analysis only --- we will not use it.

\begin{theorem}[cf.\ Theorems 4.1 and 4.2  in \cite{Adamczak_et_al_2010}]
\label{th:empirical covariance}
Assume $\epsilon \in (0,1)$ and  $X_i  \sim P_\theta$ $(i = 1,\ldots,N)$ are i.i.d.,
\begin{equation}
\label{eq:sigma}
\Sigma = \mathbb{E}_{X \sim P_\theta}\left[(X - \mathbb{E}_{X \sim P_\theta}[X])(X - \mathbb{E}_{X \sim P_\theta}[X])^\top\right]
\end{equation}
 is the covariance matrix,
 and $\hat \Sigma$ is the empirical covariance
 matrix in \eqref{empirical covariance}.
Then there exist absolute constants $c>0$ and $C>0$, such that, for $N \ge C\frac{\|\Sigma\|^2}{\epsilon^2}\log^2\left(\frac{2\|\Sigma\|^2}{\epsilon^2}\right)n$,
the following holds with probability at least $1 - \exp(-c \sqrt{n})$:
\begin{eqnarray}
 (1-\epsilon ) y^\top \hat\Sigma y            &\le   &       y^\top\Sigma y \le (1+\epsilon) y^\top \hat\Sigma y \quad\quad\quad\quad  \; \forall  y \in \mathbb{R}^n \label{isotropy1}\\
 (1-\epsilon) y^\top \hat\Sigma^{-1} y           & \le  &        y^\top\Sigma^{-1}y \le (1+\epsilon) y^\top \hat\Sigma^{-1} y \quad\quad \forall  y \in \mathbb{R}^n.  \label{isotropy2}
\end{eqnarray}
\end{theorem}
The exact details of hit-and-run sampling are outside the scope of this paper.
 For simplicity, we will therefore assume, in what follows,
the availability of an approximate covariance matrix $\hat\Sigma $ that satisfies \eqref{isotropy1} and \eqref{isotropy2}.
In other words, Theorem \ref{th:empirical covariance} only serves to motivate our assumption, we will not use it in our analysis.
We give a full analysis of the sampling process in the separate work \cite{Badenbroek_DeKlerk2018}, where we show how to
find a sample covariance matrix $\hat\Sigma $ that satisfies \eqref{isotropy1} and \eqref{isotropy2}.

\subsubsection{Analysis of the approximate direction in the  Abernethy-Hazan algorithm}
We can now show that an approximation of the search direction of Abernethy-Hazan \eqref{eq:search direction Hazan} satisfies
our `approximate negative gradient' condition \eqref{eq:error}.

\begin{theorem}
\label{thm:approximate direction}
Let $\epsilon > 0$ be given, the covariance matrix $\Sigma$ as in \eqref{eq:sigma}, and a symmetric matrix $\hat \Sigma$ that
approximates $\Sigma$ as in \eqref{isotropy1} and \eqref{isotropy2}. Further, let $f$ be as in \eqref{def:f} with minimizer $x_*$
on a given convex body $\mathcal{K}$.
Then the direction $-d = - \hat \Sigma \nabla f(x_0)$ at $x_0 \in \mathcal{K}$ satisfies
\[
 \|\nabla^2 f(x_*)^{-1}\nabla f(x_0) - d\|_{x_*} \le \sqrt{\frac{2\epsilon}{1-\epsilon}} \|\nabla^2 f(x_*)^{-1}\nabla f(x_0)\|_{x_*}.
\]
In other words, one has
$\|g_{x_*}(x_0) - d\|_{x_*} \le \varepsilon \|g_{x_*}(x_0) \|_{x_*}$ where $\varepsilon = \sqrt{\frac{2\epsilon}{1-\epsilon}}$, i.e.\ condition \eqref{eq:error}
holds for the inner product $\langle \cdot,\cdot\rangle_{x_*}$,
when the reference inner product $\langle \cdot,\cdot\rangle$ is the Euclidean dot product.
\end{theorem}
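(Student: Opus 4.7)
The plan is to reduce the intrinsic-norm inequality to an operator-norm estimate on a suitably normalized version of $\hat\Sigma$, after which the isotropy hypothesis \eqref{isotropy1} finishes the argument almost immediately. The key input is Lemma \ref{thm:Hessian sampling}, which gives $\nabla^2 f(x_*)^{-1} = \Sigma$: with the Euclidean reference inner product we therefore have $H(x_*) = \Sigma^{-1}$, so the intrinsic gradient of $f$ at $x_0$ is $g_{x_*}(x_0) = H(x_*)^{-1} g(x_0) = \Sigma \nabla f(x_0)$, while the approximate direction is $d = \hat\Sigma \nabla f(x_0)$. Writing $u := \nabla f(x_0)$ and recalling that $\|w\|_{x_*}^2 = w^\top \Sigma^{-1} w$, the inequality to be established becomes
\[
\|(\Sigma - \hat\Sigma)\, u\|_{x_*}^2 \;\le\; \frac{2\epsilon}{1-\epsilon}\, \|\Sigma u\|_{x_*}^2 .
\]

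Next I would pass to a $\Sigma$-normalized coordinate system. Setting $M := \Sigma^{-1/2}\hat\Sigma\,\Sigma^{-1/2}$ and $v := \Sigma^{1/2} u$, a short calculation using $\Sigma^{1/2}\Sigma^{-1}\Sigma^{1/2} = I$ shows
\[
\|(\Sigma - \hat\Sigma)\, u\|_{x_*}^2 \;=\; v^\top (I - M)^2 v \qquad\text{and}\qquad \|\Sigma u\|_{x_*}^2 \;=\; v^\top v,
\]
so the desired inequality is implied by the operator-norm bound $\|I - M\|^2 \le 2\epsilon/(1-\epsilon)$. Rewriting \eqref{isotropy1} in L\"owner form as $(1-\epsilon)\hat\Sigma \preceq \Sigma \preceq (1+\epsilon)\hat\Sigma$ and conjugating by $\Sigma^{-1/2}$ turns it into $\frac{1}{1+\epsilon}\, I \preceq M \preceq \frac{1}{1-\epsilon}\, I$. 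Hence the spectrum of $I - M$ lies in $[-\epsilon/(1-\epsilon),\, \epsilon/(1+\epsilon)]$, giving $\|I - M\| \le \epsilon/(1-\epsilon)$, and the elementary inequality $(\epsilon/(1-\epsilon))^2 \le 2\epsilon/(1-\epsilon)$ (which holds whenever $\epsilon \le 2/3$) closes the argument.

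There is no real obstacle here; the only conceptual point is noticing that the natural way to compare $\Sigma$ and $\hat\Sigma$ in the intrinsic norm is via $M$ in a $\Sigma^{-1/2}$-conjugated basis, since that is precisely how the $\langle\cdot,\cdot\rangle_{x_*}$ inner product sees the two matrices; everything else is routine linear algebra. Worth flagging is that \eqref{isotropy2} is not needed for the argument above, and that combining it with \eqref{isotropy1} would in fact sharpen the bound to $\|I - M\| \le \epsilon$; the constant $\sqrt{2\epsilon/(1-\epsilon)}$ in the statement is therefore correct but somewhat loose.
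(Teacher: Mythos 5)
Your proof is correct, and it takes a genuinely different and, in fact, sharper route than the paper's. The paper expands the squared intrinsic norm $\|H^{-1}(x_*)g(x_0)-\hat\Sigma g(x_0)\|_{x_*}^2$ into three scalar quadratic forms and bounds each of them individually: the first term $g^\top\Sigma g$ via \eqref{isotropy1}, the third term $(\hat\Sigma g)^\top\Sigma^{-1}(\hat\Sigma g)$ via \eqref{isotropy2}, obtaining $2\epsilon\,g^\top\hat\Sigma g$, and finally another application of \eqref{isotropy1} to pass back to $\frac{2\epsilon}{1-\epsilon}\,g^\top\Sigma g$. That argument genuinely uses both isotropy conditions and produces the stated constant $\sqrt{2\epsilon/(1-\epsilon)}$ directly. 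Your approach instead diagonalizes the problem: writing everything in the $\Sigma$-whitened basis reduces the bound to a single operator-norm estimate $\|I-M\|$ for $M=\Sigma^{-1/2}\hat\Sigma\,\Sigma^{-1/2}$, after which the L\"owner form of \eqref{isotropy1} immediately localizes the spectrum of $M$. This is cleaner, and your observation that the paper's constant is loose is correct: the whitened viewpoint makes it transparent that the ``right'' quantity is $\|I-M\|$, and $\|I-M\|\le\epsilon$ holds once \eqref{isotropy2} is brought in, which beats $\sqrt{2\epsilon/(1-\epsilon)}$ for every $\epsilon\in(0,1)$.

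One small caveat worth being explicit about: the version of your argument that uses \emph{only} \eqref{isotropy1} yields $\|I-M\|\le\epsilon/(1-\epsilon)$, and the elementary comparison $(\epsilon/(1-\epsilon))^2\le 2\epsilon/(1-\epsilon)$ closes the argument only for $\epsilon\le 2/3$. You flag this, but your concluding remark that \eqref{isotropy2} is ``not needed for the argument above'' is a bit misleading: to obtain the stated bound on the full range $\epsilon\in(0,1)$ allowed by the concentration theorem, you do need \eqref{isotropy2} (or some other input), precisely because for $\epsilon\in(2/3,1)$ the \eqref{isotropy1}-only bound $\epsilon/(1-\epsilon)$ exceeds $\sqrt{2\epsilon/(1-\epsilon)}$. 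It would be cleaner to just invoke both isotropy conditions from the start, get spectrum of $M$ contained in $\bigl[\tfrac{1}{1+\epsilon},\,1+\epsilon\bigr]$ and hence $\|I-M\|\le\epsilon$, and then note $\epsilon^2\le 2\epsilon/(1-\epsilon)$ for all $\epsilon\in(0,1)$.
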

\begin{proof}
We fix the reference inner product $\langle \cdot,\cdot\rangle$ as the Euclidean dot product,
so that $H(x_*) = \nabla^2 f(x_*) = \Sigma^{-1}$ and $g(x_0) = \nabla f(x_0)$.
One has
\begin{eqnarray*}
\|H^{-1}(x_*)g(x_0) - d\|^2_{x_*} & = & \langle H^{-1}(x_*)g(x_0) - \hat \Sigma g(x_0), H^{-1}(x_*)g(x_0) - \hat \Sigma g(x_0)\rangle_{x_*} \\
                            & = & \langle H^{-1}(x_*)g(x_0) - \hat \Sigma g(x_0), g(x_0) - H(x_*)\hat \Sigma g(x_0)]\rangle \\
            & = & g(x_0)^\top H^{-1}(x_*)g(x_0) - 2g(x_0)^\top \hat \Sigma g(x_0) + [\hat \Sigma g(x_0)]^\top H(x_*)[\hat \Sigma g(x_0)] \\
            & \le & (1+\epsilon)g(x_0)^\top \hat \Sigma g(x_0) - 2g(x_0)^\top \hat \Sigma g(x_0) + (1+\epsilon)[\hat \Sigma g(x_0)]^\top \hat \Sigma^{-1}[\hat \Sigma g(x_0)] \\
& = & 2\epsilon \cdot g(x_0)^\top \hat \Sigma g(x_0),
\end{eqnarray*}
where the inequality is from \eqref{isotropy1} and \eqref{isotropy2}. Finally, using \eqref{isotropy1} once more, one obtains
\begin{eqnarray*}
\|H^{-1}(x_*)g(x_0) - d\|^2_{x_*} &\le& \frac{2\epsilon}{1-\epsilon} g(x_0)^\top H^{-1}(x_*) g(x_0) \\
&= & \frac{2\epsilon}{1-\epsilon}\|g_{x_*}(x_0)\|^2_{x_*},
\end{eqnarray*}
as required.
\end{proof}

We need to consider another variant of the search direction in \eqref{eq:search direction Hazan},
since $\nabla f(x_0)$ will not be available exactly in general. Indeed, one can only obtain $\nabla f(x_0)$ approximately via the relation
\[
\nabla f(x_0) = \eta \hat \theta +  \nabla A^*_{-}(x_0) = \eta \hat \theta + \arg\max_{\theta \in \mathbb{R}^n} \left[-\theta^\top x_0 - A(\theta)\right],
\]
where the last equality follows from the relationship between first derivatives of conjugate functions.

Thus $\nabla f(x_0)$ may be approximated by solving an unconstrained concave maximization problem in $\theta$ approximately, and,
 for this purpose, one may use the derivatives of
$A(\theta)$ as given above.

In particular, we will assume that we have available a $\tilde g(x_0) \approx \nabla f(x_0)$ in the sense that
\begin{equation}
\| \tilde g_{x_*}(x_0) -  g_{x_*}(x_0)\|_{x_*} \le \epsilon' \|g_{x_*}(x_0)\|_{x_*},
\label{eq:tilde g}
\end{equation}
where $\tilde g_{x_*}(x_0) := \Sigma \tilde g(x_0)$, $g_{x_*}(x_0) = \Sigma \nabla f(x_0)$ as before, and $\epsilon' > 0$ is given.
To motivate our assumption, we note that the function $\theta \mapsto -\theta^\top x_0 - A(\theta)$ is self-concordant \cite{Bubeck_Eldan_2014},
and we may therefore use Corollary \ref{Cor:SC error bounds FS} to bound the complexity of approximating its maximizer.
As with the Hessian approximation, we again omit the (lengthy) details; see Section \ref{sec:conclusion} for a further discussion of our assumptions.

Thus we will consider the search direction
\begin{equation}
-\tilde d := -\hat \Sigma \tilde g(x_0) \approx -\Sigma \nabla f(x_0).
\label{eq:tilde d}
\end{equation}
\begin{corollary}
Under the assumptions of Theorem \ref{thm:approximate direction}, define for a given $\epsilon' > 0$, the direction
$-\tilde d$ at $x_0 \in D_f$ as in \eqref{eq:tilde d}, where $\tilde g(x_0) \approx \nabla f(x_0)$ satisfies \eqref{eq:tilde g}.
Then one has
\[
\|\tilde d - g_{x_*}(x_0)\|_{x_*} \le \left(\epsilon'\cdot \sqrt{\frac{1+\epsilon}{1-\epsilon}}+\sqrt{\frac{2\epsilon}{1-\epsilon}}\right)\|g_{x_*}(x_0)\|_{x_*}.
\]
In other words, one has
$\|g_{x_*}(x_0) - \tilde d\|_{x_*} \le \varepsilon \|g_{x_*}(x_0) \|_{x_*}$ where $\varepsilon = \epsilon'\cdot \sqrt{\frac{1+\epsilon}{1-\epsilon}}+\sqrt{\frac{2\epsilon}{1-\epsilon}}$, i.e.\ condition \eqref{eq:error}
holds for the inner product $\langle \cdot,\cdot\rangle_{x_*}$,
when the reference inner product $\langle \cdot,\cdot\rangle$ is the Euclidean dot product.
\end{corollary}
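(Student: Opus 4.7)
The plan is to reduce this corollary to Theorem~\ref{thm:approximate direction} by a triangle-inequality argument that separates the two sources of error: the hessian approximation (already handled by Theorem~\ref{thm:approximate direction}) and the gradient approximation (governed by \eqref{eq:tilde g}). Concretely, I would introduce the auxiliary direction $d := \hat\Sigma \nabla f(x_0)$ (the direction one would obtain if only $\hat\Sigma$ were approximate but $\nabla f(x_0)$ were exact) and write
\[
\|\tilde d - g_{x_*}(x_0)\|_{x_*} \le \|\tilde d - d\|_{x_*} + \|d - g_{x_*}(x_0)\|_{x_*}.
\]
The second summand is exactly the quantity bounded in Theorem~\ref{thm:approximate direction}, so it contributes the $\sqrt{2\epsilon/(1-\epsilon)}\|g_{x_*}(x_0)\|_{x_*}$ term.

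The real work is the first summand, $\|\tilde d - d\|_{x_*} = \|\hat\Sigma w\|_{x_*}$ where $w := \tilde g(x_0) - \nabla f(x_0)$. Unrolling the $x_*$-norm (with the reference inner product taken to be Euclidean, so that $H(x_*) = \Sigma^{-1}$), one has $\|\hat\Sigma w\|_{x_*}^2 = w^\tsp \hat\Sigma \Sigma^{-1} \hat\Sigma w$. Applying \eqref{isotropy2} with $y = \hat\Sigma w$ gives $y^\tsp \Sigma^{-1} y \le (1+\epsilon)\, y^\tsp \hat\Sigma^{-1} y = (1+\epsilon)\, w^\tsp \hat\Sigma w$, and then \eqref{isotropy1} further bounds $w^\tsp \hat\Sigma w \le \frac{1}{1-\epsilon}\, w^\tsp \Sigma w$. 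Since $\tilde g_{x_*}(x_0) - g_{x_*}(x_0) = \Sigma w$ and $\|\Sigma w\|_{x_*}^2 = w^\tsp \Sigma w$, this yields
\[
\|\tilde d - d\|_{x_*} \le \sqrt{\tfrac{1+\epsilon}{1-\epsilon}}\, \|\tilde g_{x_*}(x_0) - g_{x_*}(x_0)\|_{x_*} \le \epsilon'\sqrt{\tfrac{1+\epsilon}{1-\epsilon}}\,\|g_{x_*}(x_0)\|_{x_*},
\]
where the last step invokes the hypothesis \eqref{eq:tilde g}.

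Combining the two bounds gives exactly the claimed estimate. The only delicate step is the chained use of \eqref{isotropy1} and \eqref{isotropy2}, because one must be careful to apply the right one to the right quadratic form: \eqref{isotropy2} to convert $\Sigma^{-1}$ to $\hat\Sigma^{-1}$ (so that the $\hat\Sigma$ factors telescope), and then \eqref{isotropy1} to convert the remaining $\hat\Sigma$ back to $\Sigma$. Everything else is bookkeeping, and no additional assumption beyond those of Theorem~\ref{thm:approximate direction} and condition~\eqref{eq:tilde g} is used.
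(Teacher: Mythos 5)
Your proof is correct and follows exactly the same route as the paper: the same triangle-inequality decomposition through the auxiliary direction $d = \hat\Sigma\nabla f(x_0)$, the same chained application of \eqref{isotropy2} (to telescope $\Sigma^{-1}$ against the two $\hat\Sigma$ factors) followed by \eqref{isotropy1} (to convert the remaining $\hat\Sigma$ quadratic form back to $\Sigma$, i.e.\ the $\|\cdot\|_{x_*}$-norm), and the same final appeal to \eqref{eq:tilde g}. No meaningful differences.
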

\begin{proof}
Recall the notation $d = \hat \Sigma \nabla f(x_0)$ from Theorem \ref{thm:approximate direction}, and note that, by definition,
\begin{eqnarray*}
\| \tilde d - d\|^2_{x_*} &=& \|\hat \Sigma\left(\tilde g(x_0)-g(x_0) \right)\|_{x_*}^2 \\
&=& \langle \hat  \Sigma(\tilde g(x_0)-g(x_0), \Sigma^{-1}\hat \Sigma(\tilde g(x_0)-g(x_0)\rangle \\
&\le& (1+\epsilon)(\tilde g(x_0)-g(x_0))^\top \hat \Sigma (\tilde g(x_0)-g(x_0)) \quad\quad\mbox{ (by \eqref{isotropy2})} \\
&\le& \left(\frac{1+\epsilon}{1-\epsilon}\right)\|\tilde g_{x_*}(x_0)-g_{x_*}(x_0)\|^2_{x_*} \quad\quad\quad\quad\quad\quad\mbox{ (by \eqref{isotropy1})}\\
&\le & (\epsilon')^2\left( \frac{1+\epsilon}{1-\epsilon}\right)\|g_{x_*}(x_0)\|^2_{x_*} \quad\quad\quad\quad\quad\mbox{ (by \eqref{eq:tilde g})}.
\end{eqnarray*}
To complete the proof now only requires the triangle inequality,
\[
\|g_{x_*}(x_0)-\tilde d\|_{x_*} \le \|g_{x_*}(x_0)- d\|_{x_*} + \|d-\tilde d\|_{x_*},
\]
as well as the inequality from Theorem \ref{thm:approximate direction}.
\end{proof}

\section{Concluding remarks}
\label{sec:conclusion}
In this paper we have extended the SDP performance estimation analysis to second order methods, and
demonstrated an example of how to use the resulting error bounds in the complexity analysis of inexact interior point methods.
Our analysis of the interior point method of Abernethy and Hazan \cite{Abernethy_Hazan_2016} gives an outline of a new proof of
the polynomial complexity of the method.
Having said that, we have assumed in this paper that the gradient and Hessian of the entropic
barrier function may be computed within a fixed relative accuracy through sampling.
It is possible to complete the analysis of the sampling process in the spirit of the work of  Kalai and Vempala \cite{Kalai-Vempala 2006}
and Lov\'asz and Vempala \cite{Lovasz_Vempala_2007}. This requires detailed analysis
 of the hit-and-run sampling method for log-concave distributions on convex bodies, combined with the
 self-concordance property of the entropic barrier function.
Since the resulting analysis is lengthy, and of a very different type than that presented here, we
present it in a separate work, namely \cite{Badenbroek_DeKlerk2018}.

\subsection*{Acknowledgement}
Etienne de Klerk would like to thank Riley Badenbroek for pointing out the result in Theorem
\ref{lemma:ImproveDistanceZ}, and providing its proof, and for also pointing out a mistake in the proof
of Theorem \ref{thm:complexity bound IPM iterations} in an earlier version of this paper.

\bibliographystyle{siamplain}

\appendix
\section{Worst-case example for Theorem \ref{thm:ELS improvements}}
\label{appendix:B}
Here,  show that the bound
\[\norm{g(x_1)}\leq \left(\varepsilon+\sqrt{1-\varepsilon^2}\tfrac{(1-\kappa)}{2\sqrt{\kappa}}\right)\norm{g(x_0)}\]
from Theorem \ref{thm:ELS improvements} cannot be improved on the class of smooth strongly convex functions, by giving an example of
a function $f\in\mathcal{F}_{\mu,L}(\mathbb{R}^n)$ where the bound holds with equality.

To this end, consider the two triplets $(x_0,g_0,f_0),(x_1,g_1,f_1)\in\mathbb{R}^2\times\mathbb{R}^2\times\mathbb{R}$:
\begin{equation*}
\begin{aligned}
x_0=\begin{bmatrix}
0\\0
\end{bmatrix}, \quad g_0=\begin{bmatrix}
1\\0
\end{bmatrix},\quad
f_0=0,
\end{aligned}
\end{equation*}
and
\begin{equation*}
\begin{aligned}
x_1=\begin{bmatrix}
-\tfrac{\left(1-\epsilon ^2\right) \left(1+\kappa\right)}{2 \mu }\\\tfrac{\epsilon  \sqrt{1-\epsilon ^2} \left(1+\kappa\right)}{2 \mu }
\end{bmatrix},\quad g_1=\begin{bmatrix}
\tfrac{\sqrt{1-\epsilon ^2} \epsilon  \left(1-\kappa\right)}{2 \sqrt{\kappa}}+\epsilon ^2\\
\frac{\left(1-\epsilon ^2\right) \left(1-\kappa\right)}{2 \sqrt{\kappa}}+\epsilon  \sqrt{1-\epsilon ^2}
\end{bmatrix},\quad
f_1=-\tfrac{\left(1-\epsilon ^2\right) \left(1+\kappa\right)}{4 \mu },
\end{aligned}
\end{equation*}
along with the inexact search direction $d\in\mathbb{R}^2$
\begin{equation*}
d=\begin{bmatrix}
1-\epsilon ^2\\-\epsilon  \sqrt{1-\epsilon ^2}
\end{bmatrix},
\end{equation*}
where $\kappa:=\tfrac{\mu}{L}$ is the (inverse) condition ratio, and $\epsilon \ge 0$. The following facts hold:
\begin{enumerate}
	\item the pair of triplets satisfies the conditions
	\begin{equation*}
	\begin{aligned} f_0-f_1+\inner{g_0}{x_1-x_0}+\tfrac{1}{2(1-\mu/L)}\left(\tfrac1L\normsq{g_0-g_1}+\mu\normsq{x_0-x_1}-2\tfrac{\mu}{L}\inner{g_0-g_1}{x_0-x_1}\right)=0,\\
	f_1-f_0+\inner{g_1}{x_0-x_1}+\tfrac{1}{2(1-\mu/L)}\left(\tfrac1L\normsq{g_0-g_1}+\mu\normsq{x_0-x_1}-2\tfrac{\mu}{L}\inner{g_0-g_1}{x_0-x_1}\right)=0,
	\end{aligned}
	\end{equation*}
	\item $x_1=x_0-\gamma d$ with $\gamma=\tfrac{1+\kappa}{2 \mu }$,
	\item $\inner{d}{g_1}=0$,
	\item $\norm{g_0-d}=\varepsilon\norm{g_0}$,
	\item $\tfrac{\norm{g_1}}{\norm{g_0}}=\left(\varepsilon+\sqrt{1-\varepsilon^2}\tfrac{(1-\kappa)}{2\sqrt{\kappa}}\right)$.
\end{enumerate}
Therefore, by \cite[Theorem 4]{Taylor_Glineur_Hendrickx_2017a}, there exists a function $f\in\mathcal{F}_{\mu,L}(\mathbb{R}^n)$ satisfying
\[f(x_0)=f_0,\, f(x_1)=f_1,\, g(x_0)=g_0,\, g(x_1)=g_1.\]
For this function, one iteration of gradient descent with exact line search starting at $x_0$ achieves exactly
\[\norm{g(x_1)}=\left(\varepsilon+\sqrt{1-\varepsilon^2}\tfrac{(1-\kappa)}{2\sqrt{\kappa}}\right)\norm{g(x_0)},\]
yielding the desired result.

\section{Proof of Theorem \ref{thm:FS improvements}}
\label{Appendix:A}
\subsubsection*{{Convergence of gradient norm}}
As in the proof of Theorem \ref{thm:ELS improvements}, consider the following inequalities along with their associated multipliers:
{\begin{align*}
& {\langle{g_0-g_1},{x_0-x_1}\rangle\geq \frac{1}{1-\frac{\mu}{L}}\left(\frac{1}{L}\|{g_0-g_1}\|^2+\mu\|{x_1-x_0}\|^2-2\frac{\mu}{L}\langle{g_0-g_1},{x_0-x_1}\rangle\right)}
&&:\lambda_0,\\
&\normsq{d-g_0}-\varepsilon^2\normsq{g_0}\leq 0 &&:{\lambda_1}.
\end{align*}}
In the following developments, we will also use the form of the {algorithm:
\[ x_1=x_0-\gamma d,\] and the notation $\rho_{\varepsilon}(\gamma):=1-(1-\varepsilon)\mu\gamma$.} Recall that we want to prove that the rate $\rho^2_{\varepsilon}(\gamma)$ is valid on the interval
\[\gamma\in \left[0, \frac{2\mu-\varepsilon(L+\mu)}{(1-\varepsilon)\mu(L+\mu)}\right],\]
when $\frac{2\mu-\varepsilon(L+\mu)}{(1-\varepsilon)\mu(L+\mu)}\geq 0\Leftrightarrow \varepsilon\leq \frac{2\mu}{L+\mu}$ (that is, we only consider $\gamma\geq 0$).
We use the following values for the multipliers:
\begin{align*}
& \lambda_{0}=\frac{2}{\gamma(1-\varepsilon)}\rho_{\varepsilon}(\gamma),\\
& \lambda_1=\frac{\gamma\mu}{\varepsilon}\rho_{\varepsilon}(\gamma).
\end{align*}
In that case, one can write the weighted sum of the previous constraints in the following form:
\begin{eqnarray*}
\rho^2_{\varepsilon}(\gamma)\normsq{g_0}&\geq& \normsq{g_1}+\frac{2-(1-\varepsilon) \gamma (L+\mu)}{(1-\varepsilon) \gamma (L-\mu)} \normsq{\frac{\gamma (L+\mu) ((\varepsilon-1) \gamma \mu+1)}{(\varepsilon-1) \gamma (L+\mu)+2}d -\frac{2  ((\varepsilon-1) \gamma \mu+1)}{(1-\varepsilon) \gamma (L+\mu)+2}g_0+g_1}\\
&+&\frac{(1-\varepsilon) \gamma (1-(1-\varepsilon) \gamma \mu) \Big(-(1-\varepsilon) \gamma \mu (L+\mu)-\varepsilon (L+\mu)+2 \mu\Big)}{\varepsilon (2-(1-\varepsilon) \gamma (L+\mu))}\normsq{\frac{1}{\varepsilon-1}d+g_0}.
\end{eqnarray*}
Therefore, the guarantee
\[ \rho^2_{\varepsilon}(\gamma)\normsq{g_0}\geq \normsq{g_1}\]
is valid as long as both the Lagrange multipliers, and the coefficients of the norms in the previous expression are nonnegative. That is, under the following conditions:
\begin{itemize}
\item[-] the Lagrange multipliers are nonnegative as long as $\rho_{\varepsilon}(\gamma)\geq 0$, that is, when
\[ \gamma \leq \frac{1}{(1-\varepsilon)\mu},\]
{which is valid for all values of $\gamma$ in the interval of interest (see below).}
\item[-] The coefficients of the norms are also nonnegative, since
\begin{align*}
2-(1-\varepsilon) \gamma (L+\mu)&\geq 0 \Leftrightarrow \gamma\leq\frac{2}{(1-\varepsilon)(L+\mu)},\\
(1-(1-\varepsilon) \gamma \mu)&\geq 0\Leftrightarrow \gamma\leq \frac{1}{(1-\varepsilon)\mu},\\
\Big(-(1-\varepsilon) \gamma \mu (L+\mu)-\varepsilon (L+\mu)+2 \mu\Big)&\geq 0 \Leftrightarrow \gamma\leq \frac{2\mu-\varepsilon(L+\mu)}{(1-\varepsilon)\mu(L+\mu)},
\end{align*}
{which are all valid on the interval of interest for $\gamma$, as}:
\[\frac{2\mu-\varepsilon(L+\mu)}{(1-\varepsilon)\mu(L+\mu)}=\frac{2}{(1-\varepsilon)(L+\mu)}-\frac{\varepsilon}{(1-\varepsilon)\mu}\leq \frac{2}{(1-\varepsilon)(L+\mu)}\leq \frac{1}{(1-\varepsilon)\mu}.\]
\end{itemize}

\subsubsection*{{Convergence of distance to optimality}}
{Consider the following inequalities and the associated multipliers:}
\begin{align*}
&{\frac{1}{1-\frac{\mu}{L}}\left(\frac{1}{L}\|{g_0}\|^2+\mu\|{x_0-x_*}\|^2-2\frac{\mu}{L}\langle{g_0},{x_0-x_*}\rangle\right)+\langle{g_0},{x_*-x_0}\rangle\leq 0}\quad &&:{\lambda_0},\\
&\normsq{d-g_0}-\varepsilon^2\normsq{g_0}\leq 0 &&:{\lambda_1}.
\end{align*}
As in the case of the gradient norm, we use the notation
$\rho_{\varepsilon}(\gamma):=1-(1-\varepsilon)\mu\gamma$. Let us recall that we want to prove that the rate $\rho^2_{\varepsilon}(\gamma)$ is valid on the interval
\[\gamma\in \left[0, \frac{2\mu-\varepsilon(L+\mu)}{(1-\varepsilon)\mu(L+\mu)}\right],\]
when $\frac{2\mu-\varepsilon(L+\mu)}{(1-\varepsilon)\mu(L+\mu)}\geq 0\Leftrightarrow \varepsilon\leq \frac{2\mu}{L+\mu}$ (we only consider $\gamma\geq 0$).
We now use the following values for the multipliers:{
\begin{align*}
& \lambda_{0}=2\gamma(1-\varepsilon)\rho_{\varepsilon}(\gamma),\\
& \lambda_1=\frac{\gamma}{\mu\varepsilon}\rho_{\varepsilon}(\gamma).
\end{align*}
In} that case, one can write the weighted sum of the previous constraints in the following form:
\begin{align*}
&(1-\gamma\mu(1-\varepsilon))^2\norm{x_0-x_*} \geq \norm{x_1-x_*}+\gamma \mu^2(1-\varepsilon)\frac{2-\gamma (1-\varepsilon)(L+\mu)}{L-\mu} \times \\
&\normsq{\frac{L-\mu}{(1-\varepsilon) \mu^2 (2-\gamma(1-\varepsilon)
  (L+\mu))}d-\frac{ (L+\mu) (1-\gamma\mu(1-\varepsilon))}{\mu^2 (2-\gamma(1-\varepsilon) (L+\mu))}g_0+x_0-x_*} +\\
  &\gamma\frac{ (1-\gamma\mu (1-\varepsilon))
   (2\mu-\varepsilon(L+\mu)-\gamma\mu(1-\varepsilon)(L+\mu))}{\varepsilon \mu^2(1-\varepsilon)  (2-\gamma(1-\varepsilon)
    (L+\mu))}\normsq{d-(1-\varepsilon) g_0}.
\end{align*}
Hence, all coefficients and multipliers are positive as long as
\begin{align*}
&2\mu-\varepsilon(L+\mu)-\gamma\mu(1-\varepsilon)(L+\mu)\geq 0\Leftrightarrow \gamma\leq \frac{\mu-\varepsilon(L+\mu)}{(1-\varepsilon)\mu(L+\mu)},\\
&1-(1-\varepsilon)\mu\gamma\geq 0\Leftrightarrow \gamma\leq \frac{1}{(1-\varepsilon)\mu},\\
&2-\gamma(1-\varepsilon)(L+\mu)\geq 0 \Leftrightarrow \gamma\leq \frac{2}{(1-\varepsilon)(L+\mu)}.
\end{align*}
We refer to previous discussions for the details leading to the conclusion:
\[(1-\gamma\mu(1-\varepsilon))^2\norm{x_0-x_*} \geq \norm{x_1-x_*}.\]

\subsubsection*{Convergence of function values}
{As in the previous section, we use the notation $\rho_{\varepsilon}(\gamma):=1-(1-\varepsilon)\mu\gamma$, and consider the case \[\gamma\in \left[0, \frac{2\mu-\varepsilon(L+\mu)}{(1-\varepsilon)\mu(L+\mu)}\right].\]
For proving the desired convergence rate in terms of function values, we consider the following set of inequalities (and associated multipliers):}
\begin{align*}
&{\begin{aligned}
f_0 - f_1 &-\langle{{g}_1},{{x}_0-{x}_1}\rangle\\&\geq \frac{1}{2(1-\mu/L)}\left( \frac{1}{L}\norm{{g}_0-{g}_1}^2+ \mu \norm{{x}_0-{x}_1}^2 - 2\frac{\mu}{L} \langle{{g}_1-{g}_0},{{x}_1-{x}_0}\rangle\right)\end{aligned}} &&:\lambda_{01}=\rho_{\varepsilon}(\gamma), \\
&{\begin{aligned}
f_* - f_0 &-\langle{{g}_0},{{x}_*-{x}_0}\rangle\\&\geq \frac{1}{2(1-\mu/L)}\left( \frac{1}{L}\norm{{g}_*-{g}_0}^2+ \mu \norm{{x}_*-{x}_0}^2 - 2\frac{\mu}{L} \langle{{g}_0-{g}_*},{{x}_0-{x}_*}\rangle\right)\end{aligned}}
&&:\lambda_{*0}=\rho_{\varepsilon}(\gamma)(1-\rho_{\varepsilon}(\gamma)),\\
&{\begin{aligned}
f_* - f_1 &-\langle{{g}_1},{{x}_*-{x}_1}\rangle\\&\geq \frac{1}{2(1-\mu/L)}\left( \frac{1}{L}\norm{{g}_*-{g}_1}^2+ \mu \norm{{x}_*-{x}_1}^2 - 2\frac{\mu}{L} \langle{{g}_1-{g}_*},{{x}_1-{x}_*}\rangle\right)\end{aligned}}
&&:\lambda_{*1}=1-\rho_{\varepsilon}(\gamma),\\
&\normsq{d-g_0}-\varepsilon^2\normsq{g_0}\leq 0 &&:{\lambda_2=\frac{\gamma}{2\varepsilon} \rho_{\varepsilon}(\gamma)}.
\end{align*}{Note that $\rho_{\varepsilon}(\gamma)\leq 1$ and that the multipliers are nonnegative in the cases of interest}. We can write the weighted sum of the previous constraints in the following form :
\begin{align*}
&\rho_{\varepsilon}^2(\gamma) (f(x_0)-f(x_*))\\
&\geq f(x_1)-f(x_*)\\&+\frac{\gamma \rho_{\varepsilon}(\gamma) (L (-2 \varepsilon \gamma \mu+\rho_{\varepsilon}(\gamma)-1)+\mu (\rho_{\varepsilon}(\gamma)+1))}{2 \varepsilon (L (\rho_{\varepsilon}(\gamma)-1)+\mu (\rho_{\varepsilon}(\gamma)+1))}\normsq{d+\frac{g_0 ((\varepsilon+1) L (\rho_{\varepsilon}(\gamma)-1)-(\varepsilon-1) \mu (\rho_{\varepsilon}(\gamma)+1))}{L (2 \varepsilon \gamma \mu-\rho_{\varepsilon}(\gamma)+1)-\mu (\rho_{\varepsilon}(\gamma)+1)}}\\
&+\frac{L \mu \left(1-\rho_{\varepsilon}^2(\gamma)\right)}{2 (L-\mu)}\normsq{-\frac{d \gamma}{\rho_{\varepsilon}(\gamma)+1}-\frac{g_0 \rho_{\varepsilon}(\gamma)}{\mu \rho_{\varepsilon}(\gamma)+\mu}-\frac{g_1}{\mu \rho_{\varepsilon}(\gamma)+\mu}+x_0-x_*}\\
&+\frac{\rho_{\varepsilon}(\gamma)(L+\mu)-(L-\mu)}{2 \mu (\rho_{\varepsilon}(\gamma)+1) (L-\mu)}\normsq{\frac{2 d \gamma L \mu \rho_{\varepsilon}(\gamma)}{L (\rho_{\varepsilon}(\gamma)-1)+\mu (\rho_{\varepsilon}(\gamma)+1)}+\frac{g_0 \rho_{\varepsilon}(\gamma) (L (\rho_{\varepsilon}(\gamma)-1)-\mu (\rho_{\varepsilon}(\gamma)+1))}{L (\rho_{\varepsilon}(\gamma)-1)+\mu (\rho_{\varepsilon}(\gamma)+1)}+g_1}\\
&-\frac{\rho_{\varepsilon}(\gamma) ((\varepsilon+1) \gamma L-\rho_{\varepsilon}(\gamma)-1) ((\varepsilon-1) \gamma \mu-\rho_{\varepsilon}(\gamma)+1)}{L (2 \varepsilon \gamma \mu-\rho_{\varepsilon}(\gamma)+1)-\mu (\rho_{\varepsilon}(\gamma)+1)}\normsq{g_0}\\
&\geq f(x_1)-f(x_*).
\end{align*}

{In order to prove the last inequality, we have to show that the coefficients of the norms of the decomposition are nonnegative.
\begin{enumerate}
\item Term 1: substituting $\rho_{\varepsilon}(\gamma)$ by its expression, nonnegativity of the coefficient follows from
\begin{align*}
&\gamma\leq \frac{1}{(1-\varepsilon)\mu}\\
&\gamma\leq \frac{2 - \varepsilon(L-\mu) \left(L\mu (1-\varepsilon^2)\right)^{-1/2}}{\left(L + \mu\right)}\\
&\gamma\leq \frac{2}{(1-\varepsilon)(L+\mu)}
\end{align*}
which hold, as $\gamma\leq \frac{2 - \varepsilon(L-\mu) \left(L\mu (1-\varepsilon^2)\right)^{-1/2}}{\left(L + \mu\right)}\leq\frac{2}{(1-\varepsilon)(L+\mu)}\leq\frac{1}{(1-\varepsilon)\mu}$ on the interval of interest for $\gamma$.
\item Term 2: always nonnegative as $0\leq \rho_{\varepsilon}(\gamma)\leq 1$ on the interval of interest for $\gamma$.
\item Term 3: substituting $\rho_{\varepsilon}(\gamma)$ by its expression, one can easily verify that the coefficient is positive when \[ \gamma \leq \frac{2}{(1-\varepsilon)(L+\mu)},\]
which is true on the interval of interest for $\gamma$.
\item Term 4: cancels out by substituting $\rho_{\varepsilon}(\gamma)$ by its expression.
\end{enumerate}}

 \end{document}